\crefname{equation}{}{}
\Crefname{equation}{}{}
\theoremstyle{remark} 
\newcommand{\set}[1]{\mathcal{#1}} 
\newcommand{\RNum}[1]{\uppercase\expandafter{\romannumeral #1\relax}}
\newcommand{\rNum}[1]{\expandafter\romannumeral #1}
\newcommand{\pushright}[1]{\ifmeasuring@#1\else\omit\hfill$\displaystyle#1$\fi\ignorespaces}
\newcommand{\pushleft}[1]{\ifmeasuring@#1\else\omit$\displaystyle#1$\hfill\fi\ignorespaces}
\newcolumntype{C}[1]{>{\centering\arraybackslash}p{#1}} 
  \providecommand\BibTeX{{%
    \normalfont B\kern-0.5em{\scshape i\kern-0.25em b}\kern-0.8em\TeX}}}
\begin{document}

\title[Data-Driven Inverse Optimization for Marginal Offer Price Recovery in Electricity Markets]{Data-Driven Inverse Optimization for Marginal Offer Price Recovery in Electricity Markets}

\author{Zhirui Liang}
\affiliation{%
  \department{Department of Electrical and Computer Engineering}
  \institution{Johns Hopkins University}
  \city{Baltimore}
  \country{United States}}
\email{zliang31@jhu.edu}

\author{Yury Dvorkin}
\affiliation{%
  \department{Ralph O’Connor Sustainable Energy Institute}
  \department{Department of Electrical and Computer Engineering}
  \department{Department of Civil and System Engineering}
  \institution{Johns Hopkins University}
  \city{Baltimore}
  \country{United States}}
  \email{ydvorki1@jhu.edu}

\renewcommand{\shortauthors}{Z. Liang and Y. Dvorkin}

\begin{abstract}
  This paper presents a data-driven inverse optimization (IO) approach to recover the marginal offer prices of generators in a wholesale energy market. By leveraging underlying market-clearing processes, we establish a closed-form relationship between the unknown parameters and the publicly available market-clearing results. Based on this relationship, we formulate the data-driven IO problem as a computationally feasible single-level optimization problem. The solution of the data-driven model is based on the gradient descent method, which provides an error bound on the optimal solution and a sub-linear convergence rate. We also rigorously prove the existence and uniqueness of the global optimum to the proposed data-driven IO problem and analyze its robustness in two possible noisy settings. The effectiveness of the proposed method is demonstrated through simulations in both an illustrative IEEE 14-bus system and a realistic NYISO 1814-bus system.
\end{abstract}

\begin{CCSXML}
<ccs2012>
<concept>
<concept_id>10010147.10010257.10010258.10010259</concept_id>
<concept_desc>Computing methodologies~Supervised learning</concept_desc>
<concept_significance>500</concept_significance>
</concept>
<concept>
<concept_id>10010583.10010662.10010668.10010672</concept_id>
<concept_desc>Hardware~Smart grid</concept_desc>
<concept_significance>500</concept_significance>
</concept>
</ccs2012>
\end{CCSXML}

\ccsdesc[500]{Computing methodologies~Supervised learning}
\ccsdesc[500]{Hardware~Smart grid}
    
\keywords{data-driven inverse optimization (IO), power market, DC optimal power flow (DCOPF), gradient descent (GD)}

\received{10 February 2023}

\maketitle

\section{Introduction}
\label{sec:Introduction}
The US power sector has undergone a transformation from a monopoly to a liberalized system since the 1980s, in order to promote competition and increase efficiency \cite{li2011modeling}. This change has opened up opportunities for power producers to make more profits, but also increases the risk of not being selected for dispatch if their offer price surpasses that of their competitors. Knowing the market, including the prices offered by competitors, allows power producers to maximize their profits and minimize their risk exposure \cite{eckbo2009bidding}. In parallel, open-access policies (e.g., FERC Order 888) have authorized the release of some market data, including selected transmission network parameters and market-clearing results \cite{helman2006market}, which allows for data mining and reverse-engineering of the unreleased market data. Building on previous work that has examined the recovery of rival marginal offer prices from the market-clearing results through inverse optimization \cite{ruiz2013revealing,chen2017strategic,chen2019learning,birge2017inverse}, this paper also explores the price recovery problem in electricity markets but with a focus on designing performance guarantees for convergence, robustness, and uniqueness of the global optimum in inverse optimization problems.

\subsection{Literature Review}
\label{subsec:Literature_review}
An inverse optimization (IO) problem seeks to recover unknown parameters in the objective function \cite{keshavarz2011imputing} or constraints \cite{chan2020inverse} of a forward optimization (FO) problem. 
The FO could be a linear program \cite{ahuja2001inverse}, a conic program \cite{iyengar2005inverse}, a mixed-integer program \cite{ahmadian2018algorithms}, a linearly constrained separable convex program \cite{zhang2010inverse}, or more generally, a variational inequality function \cite{thai2018imputing,bertsimas2015data}. 

The classic IO model assumes that there exists a single set of parameters that make the given solutions optimal. However, in reality, noise in observations can prevent the existence of parameters that make all solutions strictly optimal. This noise can come from various sources, such as (i) measurement errors during the data collection process, (ii) mismatches between the forward model and the actual underlying decision-making process, and (iii) bounded rationality (i.e., when the decision maker settles for sub-optimal results due to cognitive or computational limitations) \cite{aswani2018inverse,mohajerin2018data}. Applying the classic IO in noisy settings can lead to infeasibility or uninformative solutions, such as a trivial zero cost vector \cite{chan2019inverse}. Therefore, the goal of IO in noisy settings is to minimize the fitting errors between the model and data to make the given solutions “approximately” optimal \cite{thai2018imputing,mohajerin2018data,aswani2018inverse,chan2019inverse}.

In deterministic settings, IO can precisely recover the parameters of the FO problem based on a single observation. However, in noisy settings, multiple observations are required to minimize the empirical loss of the IO problem. This type of IO, which can handle multiple observations, is also known as data-driven IO \cite{mahmoudzadeh2022learning,mohajerin2018data}. Solving data-driven IO problems can become difficult with an increase in variables as more training data is incorporated. To tackle this challenge, researchers have proposed to use machine learning (ML) methods instead of conventional data-driven IO models \cite{barmann2017emulating,dong2018generalized,tan2019deep,bian2022demand}. 

The objective of IO is similar to those of some ML techniques, especially neural networks, as they both aim to estimate the unknown parameters of a model based on available observations. However, the key difference is that the underlying model in IO corresponds to the actual FO problem and its parameters can be interpreted, while in ML, the unknown parameters (such as weights and biases in neural networks) often lack a straightforward explanation. To incorporate physical laws into ML models, researchers have developed physics-informed ML methods \cite{karniadakis2021physics}. Essentially, if the FO problem with unknown parameters is transformed into a differentiable layer in the neural network, these unknown parameters can be learned through training (similar to other parameters in the network) \cite{amos2017optnet,agrawal2019differentiable,butler2022efficient,ferber2020mipaal}. 
For instance, OptNet is a network architecture that integrates a quadratic optimization program as a layer within an end-to-end neural network \cite{amos2017optnet}. In \cite{bian2022demand}, OptNet is used to determine the unknown parameters in a demand response model. The estimation of unknown parameters can also be achieved through other ML methods. For example, \cite{barmann2017emulating,dong2018generalized} reveal the parameters through an online learning process, while \cite{tan2019deep} presents a deep inverse optimization model that formulates the parameter recovery problem as a deep learning model. The fundamental assumption behind these ML models is that the FO model is differentiable with respect to the unknown parameters, which allows for the computation of gradients and the training based on (stochastic) gradient descent methods.

To compare IO and non-physics-informed ML, \cite{iraj2021comparing} evaluated their performance in imputing a convex objective function and found that IO is more suitable for problems with smaller training sets or highly complex objectives/constraints, while ML can handle larger training sets and is less sensitive to noisy training data. However, potential drawbacks of ML, such as its tendency to diverge in extreme situations, restrict its use in critical decision-making. Hence, to gain the trust of decision makers in using ML, formal performance guarantees need to be established for ML models \cite{avin2021filling}.

IO has various applications across fields, including in power systems and power markets. 
To maximize the profit of market participants in the day-ahead bidding process, IO has been used to recover the piece-wise linear cost function of generators based on historical market-clearing results in \cite{ruiz2013revealing,chen2017strategic,chen2019learning}. Similarly, \cite{risanger2020inverse} also designs a IO problem based on market-clearing results, but the forward problem is an equilibrium model in an oligopolistic electricity market, aimed at determining if the market structure matches the observed outcomes. While \cite{ruiz2013revealing,chen2017strategic,chen2019learning,risanger2020inverse} assume that all parameters except for offer prices are known, \cite{birge2017inverse} uses IO to recover market structures that are not disclosed to market participants, such as parameters of transmission lines. 
Alternatively, IO has been used to estimate the parameters of demand response in power systems in studies such as \cite{saez2016data,saez2017short,kovacs2021inverse}. In \cite{saez2016data}, IO is recast as a bi-level program and solved heuristically. \cite{saez2017short} deals with a non-convex IO model and develops a two-step heuristic approach to solve it. In \cite{kovacs2021inverse}, IO is transformed into a quadratically constrained quadratic program and solved through successive linear programming.

The aim of this paper is to develop a data-driven IO method for recovering the marginal offer prices of generators from historical market-clearing results. The US power systems and markets have the following unique properties that distinguish this IO problem from other application scenarios.

First, the structures and most parameters of power markets are \textit{transparent} \cite{NYISO_RNA_report}, and the market-clearing results, including schedules and prices, are \textit{promptly and accurately published} \cite{NYISO_price,NYISO_market_clearing_results}. This allows for the observation of both primal variables (schedules) and some dual variables (prices) in the forward problem, giving our price recovery problem an advantage over other IO problems where only primal solutions can be observed.

Second, the market operation relies on the use of \textit{locational marginal prices (LMPs)} which represent the incremental costs of providing the next megawatt of energy at different locations within the power system. However, the offer price of a generator is only reflected in the LMPs when it is the \textit{marginal} generator at that location. Hence, a single observation is insufficient to recover the offer prices of all generators, requiring the use of data-driven IO to explore multiple scenarios where different generators are marginal.

Third, the offer price of a generator tend to be \textit{stable} over time. In day-ahead markets, generators are paid according to LMPs instead of their offer prices. To secure market participation, they typically set the offer prices close to or slightly above their generation costs. However, the offer price of a generator may vary due to external factors like fuel prices and weather, causing the observations from the market being the optimal solutions to the same forward model but with different cost vectors.

Finally, the noise in market-clearing results is \textit{sparse and traceable}. According to \cite{LMP_slides}, only a small percentage (less than 0.1\%) of real-time intervals have resulted in price correction since 2009, indicating that random errors in published price data are uncommon. Another source of noise is the mismatch between the forward model studied in this paper (i.e., the DC optimal power flow model) and the actual market-clearing model which takes into account more constraints and may involve ad-hoc manipulations by market operators. These noises may affect the accuracy of IO results.

In summary, the transparent market structure and abundant market data make the data-driven IO feasible. However, the noise in historical data and the fluctuating nature of offer prices pose challenges to the IO problem. We plan to tackle these challenges by solving the data-driven IO problem using the gradient descent method to effectively utilize the training data and ensure robustness in noisy settings. Our focus is also on establishing performance guarantees for the proposed approach.

\subsection{Contributions}
\label{subsec:contributions}
After reviewing the related work on IO in Section~\ref{sec:IO_review} and formulating the FO problem in Section~\ref{sec:Forward_model}, our contributions in the rest of the paper are: 
\begin{itemize}
    \item We propose a data-driven IO model for marginal offer price recovery in wholesale power markets. Our method differs from previous studies such as \cite{ruiz2013revealing,chen2019learning} since it incorporates the unknown cost vector directly into the objective function, rather than relying solely on the primal or dual variables of the FO problem. This design results in a computationally manageable single-level optimization problem, compared to the bi-level programs in \cite{aswani2018inverse,mohajerin2018data}.
    \item Unlike previous work that solves the data-driven IO using off-the-shelf solvers \cite{ruiz2013revealing,chen2017strategic,chen2019learning,risanger2020inverse,birge2017inverse}, we solve it using gradient descent and derive the corresponding error bound. Our method allows for proving the convergence to the global optimum with a sub-linear rate, whereas the method in \cite{bian2022demand} only guarantees convergence in the absence of inequality constraints in the FO problem. Furthermore, we rigorously prove the existence and uniqueness of the global optimum to the data-driven IO problem, which are generally missing in other studies.
    \item Different from \cite{ruiz2013revealing,chen2017strategic,chen2019learning,birge2017inverse} which accomplish the offer price recovery using IO but neglect the impact of noise in observations, our approach considers potential sources of errors in market-clearing results and evaluates robustness of the data-driven IO in two noisy settings. The validity and robustness of the proposed data-driven IO method are demonstrated through simulations in the IEEE 14-bus system and the 1814-bus NYISO transmission network.
\end{itemize}
While the proposed data-driven IO model focuses on the price recovery problem in day-ahead markets, it can also be adapted to applications in real-time and ancillary service markets by appropriately changing constraints (while preserving linearity) and training data.

\subsection{Notations}
\label{subsec:notations}
The following notations are used in the paper: $I_{n}$ is an identity matrix of size $n \times n$ with ones on the main diagonal and zeros elsewhere, $J_{m,n}$ is an all-ones matrix of size $m \times n$, $O_{m,n}$ is an all-zeros matrix of size $m \times n$, $D(\cdot)$ creates a diagonal matrix from a vector, and $\left\| \cdot \right\|_p$ represents the $\ell_p$-norm.

\section{Overview of Inverse Optimization}
\label{sec:IO_review}
In this section, we describe a general linear optimization model as the FO problem, formulate the corresponding IO models in deterministic and noisy settings, extend the single-observation IO to the data-driven IO, and compare the benefits and drawbacks of various IO formulations.

\subsection{Linear FO Problem}
\label{subsec:forward_model}
Let $x \in \mathbb{R}^n$, $w \in \mathbb{R}^n$, $A \in \mathbb{R}^{m \times n}$, and $b \in \mathbb{R}^m$. We define our linear forward optimization (FO) problem as:
\begin{subequations}
\begin{align}
   \textbf{FO}(w): &\min_{x}\ w^T x \label{FO:obj}\\
&\text{s.t. } \ 
    (\xi): Ax \ge b,  \label{FO:constraint}
\end{align}%
\label{mod:FO}%
\end{subequations}%
\allowdisplaybreaks[0]%
where $\xi \in \mathbb{R}^m$ is the vector of dual variables associated with $Ax \ge b$. Let $\set{X}(w)$ be the set of feasible solutions to \textbf{FO}($w$) and $\set{X}^{OPT}(w)$ be the set of optimal solutions to \textbf{FO}($w$). We assume that $\set{X}(w)$ and $\set{X}^{OPT}(w)$ are non-empty.

\subsection{Deterministic IO Formulation}
\label{subsec:deterministic_IO}
Let $x^0 \in \mathbb{R}^n$ denote an observed solution to \textbf{FO}($w$) in \eqref{mod:FO}. In a deterministic setting, there exists a cost vector $w$ such that $x^0$ is an optimal solution to \textbf{FO}($w$). Therefore, given $A$, $b$ and $x^0$, the goal is to find $w$ such that $x^0 \in \set{X}^{opt}(w)$. Note that there may exist multiple $w$ that can achieve this goal \cite{keshavarz2011imputing}. For example, if $x^0 \in \set{X}^{opt}(w)$, then $x^0 \in \set{X}^{opt}(G(w))$ also holds, where $G(\cdot)$ is any convex increasing function. The uniqueness of $w$ can be guaranteed by normalization and regulation, e.g., by requiring $\left\| w \right\| = 1$. Further, if \textbf{FO}($w$) is a linear program, its inverse problem \textbf{IO}($x^0$) is also a linear program  \cite{ahuja2001inverse}.

One way to evaluate the optimality of the estimated $w$ is via the optimality conditions of \textbf{FO}($w$), which can be satisfied exactly by $x^0$ when the optimal $w$ is found. Therefore the loss function of \textbf{FO}($w$) is trivial and can be set to 0. The optimality primal-dual (PD) conditions of \textbf{FO}($w$) can be formulated as \cite{ahuja2001inverse}:
\begin{subequations}
\begin{align}
   \textbf{IO-PD}(x^0): \min_{w,\xi} &\ 0 \label{IO_deter:obj}\\
\text{s.t. } \ 
    & \left\| w \right\| = 1  \label{IO_deter:norm} \\
    & Ax^0 \ge b \label{IO_deter:prim_feasibility} \\
    & A^T \xi = w  \label{IO_deter:dual_feasibility_1} \\
    & \xi \ge O  \label{IO_deter:dual_feasibility_2} \\
    & w^T x^0 = b^T \xi,  \label{IO_deter:strong_duality}
\end{align}%
\label{mod:IO_deter_dual}%
\end{subequations}%
\allowdisplaybreaks[0]%
where \cref{IO_deter:norm} ensures a unique optimal solution, \cref{IO_deter:prim_feasibility} embodies primal feasibility,  \cref{IO_deter:dual_feasibility_1} and \cref{IO_deter:dual_feasibility_2} denote dual feasibility, and \cref{IO_deter:strong_duality} represents strong duality. Note that \cref{IO_deter:prim_feasibility} can be omitted since it does not depend on any primal or dual variables. 

Alternatively, the optimality conditions of \textbf{FO}($w$) can also be formulated as the Karush-Kuhn-Tucker (KKT) conditions \cite{keshavarz2011imputing}:
\begin{subequations}
\begin{align}
   \textbf{IO-KKT}(x^0): \min_{w,x,\xi} &\ 0 \label{FO_kkt:obj}\\
\text{s.t. } \
   & \text{\cref{IO_deter:norm,IO_deter:prim_feasibility,IO_deter:dual_feasibility_1,IO_deter:dual_feasibility_2}} \nonumber \\
   & D(\xi) (b-Ax^0)=O, \label{FO_kkt:comp}
\end{align}%
\label{mod:IO_deter_kkt}%
\end{subequations}%
\allowdisplaybreaks[0]%
where \cref{FO_kkt:comp} is complementary slackness. Note that \cref{IO_deter:dual_feasibility_1} is referred to as the stationarity constraint in the KKT constraints. The \textbf{IO-PD} and \textbf{IO-KKT} formulations in \cref{mod:IO_deter_dual} and \cref{mod:IO_deter_kkt} are equivalent, i.e.,
\begin{equation}
    x^0 \in \set{X}^{opt}(w) \Leftrightarrow  \text{\cref{IO_deter:norm,IO_deter:prim_feasibility,IO_deter:dual_feasibility_1,IO_deter:dual_feasibility_2,IO_deter:strong_duality} hold} \Leftrightarrow  \text{\cref{IO_deter:norm,IO_deter:dual_feasibility_1,IO_deter:prim_feasibility,FO_kkt:comp,IO_deter:prim_feasibility,IO_deter:dual_feasibility_2} hold}, \nonumber
\end{equation}
and their feasibility is guaranteed \cite{chan2019inverse}.

\subsection{Generalized IO (GIO) in Noisy Settings}
\label{subsec:GIO}
In noisy settings, it is assume that the observed solution $x^0$ is feasible but not necessarily optimal in \textbf{FO}($w$), i.e., $x^0 \in \set{X}(w)$. Thus, the optimality conditions in \cref{mod:IO_deter_dual} or \cref{mod:IO_deter_kkt} may not be exactly met, and the objective of IO becomes finding the vector $w$ such that $x^0$ is an “$\epsilon$-optimal” solution, i.e., the choice of $w$ enables $x^0$ to approximately satisfy the optimality conditions of \textbf{FO}($w$). This IO problem is referred to as generalized IO (GIO) in \cite{chan2019inverse} and there are two GIO models reported in the literature.

Based on the \textbf{IO-PD} formulation, GIO can be formulated by adding a slack variable $\epsilon_{pd}$ to relax the constraints of primal feasibility \cref{IO_deter:prim_feasibility} and strong duality \cref{IO_deter:strong_duality} \cite{chan2019inverse,bertsimas2015data,risanger2020inverse}, as given by:
\begin{subequations}
\begin{align}
   \textbf{GIO-PD}(x^0): \min_{w,\xi,\epsilon_{pd}} &\ \left\|\epsilon_{pd} \right\|_p \label{GIO_pd:obj}\\
\text{s.t. } \ 
    &\text{\cref{IO_deter:dual_feasibility_1,IO_deter:dual_feasibility_2,IO_deter:norm}} \nonumber \\
    & A(x^0 +\epsilon_{pd}) \ge b \label{GIO_pd:primal_feasibility} \\
    & w^T (x^0 +\epsilon_{pd}) = b^T \xi. \label{GIO_pd:strong_duality}
\end{align}%
\label{mod:GIO_pd}%
\end{subequations}%
\allowdisplaybreaks[0]%
It is proved in \cite{chan2019inverse} that \cref{GIO_pd:primal_feasibility} can be omitted since it is automatically satisfied by the optimal solution to \textbf{GIO-PD}($x^0$) without \cref{GIO_pd:primal_feasibility}.

Alternatively, GIO can also be formulated based on the \textbf{IO-KKT} formulation by relaxing the stationarity conditions in \cref{IO_deter:dual_feasibility_1} and complementary slackness in \cref{FO_kkt:comp} with slack variables $\epsilon_{stat}$ and $\epsilon_{comp}$ \cite{keshavarz2011imputing,saez2016data}, respectively:
\begin{subequations}
\begin{align}
   \textbf{GIO-KKT}(x^0): \min_{w,\xi,\epsilon_{stat},\epsilon_{comp}} &\ \left\|\epsilon_{stat}\right\|_p + \left\|\epsilon_{comp}\right\|_p \label{GIO_kkt:obj}\\
\text{s.t. } \ 
    & \text{\cref{IO_deter:norm,IO_deter:prim_feasibility,IO_deter:dual_feasibility_2}} \nonumber \\
    & w - A^T \xi = \epsilon_{stat} \label{GIO_kkt:stat}\\
    & D(\xi) (b-Ax^0)= \epsilon_{comp}. \label{GIO_kkt:comp}
\end{align}%
\label{mod:GIO_kkt}%
\end{subequations}%
\allowdisplaybreaks[0]%
If the observed solution is not feasible, i.e., $x^0 \notin \set{X}(w)$, we can add slack variables $\epsilon_{prim}$ and $\epsilon_{dual}$ to the primal feasibility \cref{IO_deter:prim_feasibility} and the dual feasibility \cref{IO_deter:dual_feasibility_2}, respectively, instead of forcing them to hold.

Both forms of GIO are typically straightforward to solve. For example, \cite{chan2019inverse} derives the closed-form solutions to \textbf{GIO-PD} under different forms of loss functions, while \cite{keshavarz2011imputing} proves that \textbf{GIO-KKT} in \cref{mod:GIO_kkt} is a finite-dimensional convex optimization problem which can be solved efficiently with off-the-shelf solvers.

\subsection{Nominal IO (NIO) in Noisy Settings}
\label{subsec:NIO}
The study in \cite{aswani2018inverse} shows that \textbf{GIO-PD} and \textbf{GIO-KKT} are statistically inconsistent, meaning that their results may converge to incorrect values even with a large amount of training data. In response, \cite{ghobadi2018robust} introduces the nominal IO (NIO), in which the objective is to minimize the difference between the observed solution $x^0$ and the optimal solution of \textbf{FO}($w$).
\begin{align}
   \textbf{NIO}(x^0): \min_{w,x} \ \left\|x^0 - {\rm{arg}} \min_{x} \ \textbf{FO}(w)\right\|_p, \label{mod:NIO_pred_bi}
\end{align}
Note that \cref{mod:NIO_pred_bi} is a bi-level program because generally there is no closed-form solution to $\min_x$ \textbf{FO}($w$). To reduce \cref{mod:NIO_pred_bi} to a single-level program, $\min_x$ \textbf{FO}($w$) must be substituted with either the PD constraints in \cref{mod:IO_deter_dual} or the KKT constraints in \cref{mod:IO_deter_kkt}. For example, the NIO formulation based on the PD constraints is given by:
\begin{subequations}
\begin{align}
   \textbf{NIO-PD}(x^0): \min_{w,x,\xi} &\ \left\|x^0-x \right\|_p \label{NIO_pred:obj}\\
\text{s.t. } \ 
    &\text{\cref{IO_deter:norm,IO_deter:dual_feasibility_1,,IO_deter:dual_feasibility_2}} \nonumber \\
    & Ax \ge b \label{NIO:prim_feasibility} \\
    & w^T x = b^T \xi. \label{NIO:strong_duality}
\end{align}%
\label{mod:NIO_pred}%
\end{subequations}%
\allowdisplaybreaks[0]%
Similarly, the NIO based on KKT constraints can be formulated. 
The distinction between the GIO and NIO formulations lies in the choice of loss functions that measure the discrepancy between predictions and observations. In the GIO formulation, the loss is measured by the slackness required to make the observed solution satisfy the approximate optimality conditions under the estimated parameters. In the NIO formulation, the loss is the distance between the observed solution and the optimal solution to FO under the estimated parameters. Moreover, the NIO formulation is statistical consistent \cite{aswani2018inverse} and more robust to noise \cite{thai2018imputing}, but it is a bi-level program and thus NP-hard \cite{aswani2018inverse}, while the GIO formulations are relatively easier to solve.

\subsection{Data-driven IO Formulation}
\label{subsec:IO_sum}
The IO formulations presented in \cref{mod:IO_deter_dual,mod:IO_deter_kkt,mod:GIO_pd,mod:GIO_kkt,mod:NIO_pred_bi,mod:NIO_pred} are based on a single observation $x^0$, but they can be extended to the data-driven form to accommodate multiple observations. For instance, given $N$ observed solutions to FO $\{x^0_i\}_{i \in \set{I}}$ and the corresponding parameters $\{A_i,b_i\}_{i \in \set{I}}$, the NIO in \cref{mod:NIO_pred} can be adapted as follows:
\begin{align}
   \min_{w,\{{x_i}\}_{i \in \set{I}}} \ \sum\nolimits_{i \in \set{I}} \left\|x_i^0 - {\rm{arg}} \min_{x_i} \ \textbf{FO}(w,A_i,b_i)\right\|_p, \label{mod:NIO_pred_multi}
\end{align}
which is a more computationally demanding bi-level problem compared to \cref{mod:NIO_pred}. This is because the number of variables in \cref{mod:NIO_pred_multi} is approximately $N$ times larger than that in \cref{mod:NIO_pred}.

\section{Forward Problem Formulation}
\label{sec:Forward_model}
In this section, we present the standard US power market-clearing procedure, formulate the day-ahead market-clearing model, and derive the expression for locational marginal prices (LMPs) based on the Lagrangian and KKT conditions of the market-clearing model.

\subsection{Energy Market and Its Settlement Process}
\label{subsec:bidding}
In the US, the operation of power systems and power markets are managed by the designated independent system operators (ISOs). ISOs ensure the safe and reliable operation of power systems and regulate markets for various products such as capacity, energy, and ancillary services \cite{user_guide}. Fig.~\ref{fig:power_market} illustrates the typical two-settlement process in energy markets, including the day-ahead market (DAM) and real-time market (RTM). The clearing of DAM is based on security-constrained unit commitment (SCUC) and security-constrained economic dispatch (SCED), which determines the least-cost generator schedules and hourly LMPs for the next day based on day-ahead generation offers, load bids, as well as the forecast of load and renewable energy source (RES) generation. The clearing of RTM is based on real-time commitment (RTC) and real-time dispatch (RTD). The commitment of units is predominantly determined in the DAM, making it a crucial aspect for power suppliers. However, the actual scheduling may diverge from the DAM schedule due to changes in operating conditions, influences of additional RT generation offers, and variations in actual load. This paper focuses on recovering the DA offer prices of generators from the DA market-clearing results.

\begin{figure}[!t]
    \centering
    \includegraphics[width=0.8\linewidth]{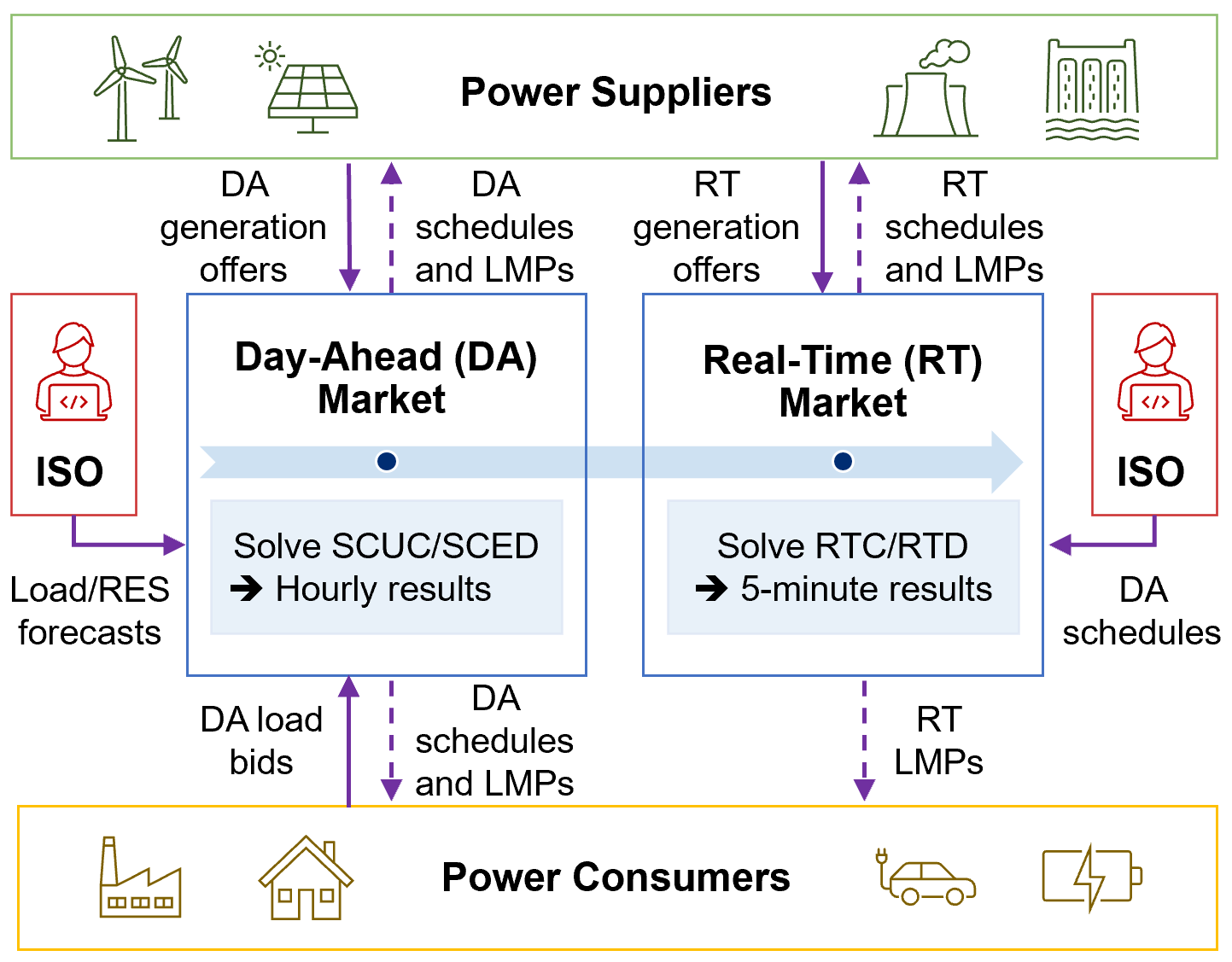}
    \caption{Two-settlement process in US energy markets.}
    \label{fig:power_market}
\end{figure}

\subsection{Day-Ahead Market-Clearing Model}
\label{subsec:SCUC}
The actual SCUC model employed by ISOs is complex and involves multiple solution steps, as described in \cite{DA_scheduling_manual}, and it is simplified to a representative SCUC model in some papers \cite{fu2007fast}. 
Following similar simplifications, we formulate the following UC model for a system with $n$ generators, $m$ nodes and $l$ lines:
\begin{subequations}
\begin{align}
   \textbf{UC}: & \min_{u,x}\ c^T x \label{UC:obj}\\
\text{s.t. } \ 
   & J_{n,1}^T x= J_{m,1}^T e  \label{UC:power_balance}\\
   & x \leq  u x^{\max} \label{UC:pg_max}\\
   & u x^{\min} \leq x \label{UC:pg_min}\\
   & \Phi (S x -e) \leq f^{\max} \label{UC:flow_max} \\
   & - f^{\max} \leq \Phi (S x -e),  \label{UC:flow_min}
\end{align}%
\label{mod:UC}%
\end{subequations}%
\allowdisplaybreaks[0]%
where \cref{UC:obj} minimizes system operation cost using a linear cost function, \cref{UC:power_balance} ensures the power balance in the system, \cref{UC:pg_min} and \cref{UC:pg_max} limit the output of generators to their technical bounds $x^{\min}$ and $x^{\max}$, \cref{UC:flow_min} and \cref{UC:flow_max} limit the power flow on each line to the maximum transmission capacity $f^{\max}$. Other notations used in \cref{mod:UC} are explained in Table~\ref{tab:notations}. 

Model \cref{mod:UC} is a mixed-integer linear program (MILP) due to the presence of binary variables $u$. Although non-convex, it can be solved by modern solvers such as CPLEX or Gurobi. To obtain the dual variables for marginal price calculation, we need to convert \cref{mod:UC} into an equivalent convex linear program (LP) by replacing binary variables to real variable $u\in \mathbb{R}^n$ and setting $u=u^*$, where $u^*$ is the optimal commitment results obtained by solving \cref{mod:UC} with a MILP solver. This approach follows the results in \cite{o2005efficient}, where the authors show that the optimal solution of the MILP is equal to the optimal solution of the LP when $u=u^*$ is enforced in LP. 
The resulting LP is the following DC optimal power flow (DCOPF) model:
\begin{subequations}
\begin{align}
   \textbf{DCOPF}: & \min_{x}\ c^T x \label{DCOPF:obj}\\
\text{s.t. } \ 
   &(\lambda): J_{n,1}^T x= J_{m,1}^T e  \label{DCOPF:power_balance}\\
   &(\alpha): x \leq  u^*x^{\max} \label{DCOPF:pg_max}\\
   &(\beta): u^*x^{\min} \leq x \label{DCOPF:pg_min}\\
   &(\mu): \Phi (S x -e) \leq f^{\max} \label{DCOPF:flow_max} \\
   &(\nu): - f^{\max} \leq \Phi (S x -e).  \label{DCOPF:flow_min}
\end{align}%
\label{mod:DCOPF}%
\end{subequations}%
\allowdisplaybreaks[0]%
Greek letters in parentheses in \cref{DCOPF:power_balance,DCOPF:pg_min,DCOPF:pg_max,DCOPF:flow_min,DCOPF:flow_max} denote Lagrangian multipliers (dual variables) of the respective constraints. Specifically, $\lambda$ denotes the energy price at the reference node in the power system.
The KKT optimality conditions of \cref{mod:DCOPF} are:
\begin{subequations}
\begin{align}
   & \text{\cref{DCOPF:power_balance,DCOPF:pg_min,DCOPF:pg_max,DCOPF:flow_min,DCOPF:flow_max}} \nonumber\\
   &c - \lambda J_{n,1} +\alpha -\beta+(\Phi S)^T \mu- (\Phi S)^T \nu = O_{n,1} \label{matrix_kkt:pg}\\
   & D(\alpha) (x-u^*x^{\max})=O_{n,1} \label{matrix_kkt:alpha}\\
   & D(\beta) (u^*x^{\min}-x)=O_{n,1} \label{matrix_kkt:beta}\\
   & D(\mu) (\Phi S x - \Phi e-f^{\max})=O_{l,1}  \label{matrix_kkt:mu}\\
   & D(\nu) (-\Phi S x + \Phi e-f^{\max})=O_{l,1}  \label{matrix_kkt:nu} \\
   & \alpha \ge O_{n,1},\ \beta \ge O_{n,1},\ \mu \ge O_{l,1},\ \nu \ge O_{l,1}.
\end{align}%
\label{mod:matrix_KKT}%
\end{subequations}%
\allowdisplaybreaks[0]%
The Lagrangian function of \cref{mod:DCOPF} is:
\begin{align}
\mathcal{L}(x,\lambda,\alpha,\beta,\mu,\nu) &=c^T x+ \lambda(J_{m,1}^T e- J_{n,1}^T x) + \alpha^T(x-u^*x^{\max})\nonumber \\ 
&  + \beta^T( u^*x^{\min}-x) + \mu^T(\Phi S x - \Phi e-f^{\max}) \nonumber \\ 
&  + \nu^T(-\Phi S x + \Phi e-f^{\max}).
\label{matrix_lagrangian}
\end{align}
We assume that market participants have limited information about the actual market-clearing model used by the ISOs, i.e., they are only aware of the DCOPF model in its simplest form. Therefore, the DCOPF model in \cref{mod:DCOPF} will be used as the FO model in Section~\ref{sec:Proposed_model}. If some market participants have more knowledge about the actual (ground truth) market-clearing model, that is knowledge exceeding the DCOPF model in \cref{mod:DCOPF}, the IO model proposed in Section~\ref{sec:Proposed_model} could be upgraded to accommodate the FO problem with more constraints (e.g., linearized ACOPF).

\subsection{LMP Derivation from the DCOPF Model}
\label{subsec:LMP}
The use of LMPs is the cornerstone of market operation \cite{LMP_slides}.
The marginal energy price, represented by the dual variable $\lambda$ in \cref{DCOPF:power_balance}, is the LMP at all nodes when the transmission capacity is unlimited. It reflects the cost of serving the next increment of load in the most efficient way. Based on \cref{matrix_kkt:pg}, we can express $\lambda$ as:
\begin{equation}
   \lambda = J_{1,n} \big[c + \alpha -\beta+(\Phi S)^T \mu- (\Phi S)^T \nu \big]  \label{eq:reference_price}
\end{equation}
However, LMPs usually vary between different locations since the transmission constraints obstruct the flow of the least-cost electricity to every part of the system, resulting in congestion costs.
The vector of LMPs ($\omega$) can be derived based on \cref{matrix_lagrangian} as:
\begin{equation}
   \textbf{LMPs}: \omega \equiv \frac{\partial \mathcal{L}}{\partial e} = J_{m,1} \lambda - \Phi^T \mu + \Phi^T \nu,
\label{eq:lmp_without_c}
\end{equation}
where $\lambda$ is the LMP at the reference node (so $\lambda$ is a component of vector $\omega$), and $\Phi^T (\nu-\mu)$ is congestion cost. Another expression of LMPs can be derived by substituting \cref{eq:reference_price} into \cref{eq:lmp_without_c}, as:
\begin{align}
   \omega & = J_{m,n} \big[c + \alpha -\beta+(\Phi S)^T (\mu - \nu) \big] + \Phi^T (\nu - \mu) \nonumber \\
   & = J_{m,n} (c + \alpha - \beta)+ (J_{m,n} S^T - I_m)\Phi^T (\nu - \mu)
   \label{eq:lmp_with_c}
\end{align}
In power markets, while the offer prices of generators (elements in cost vector $c$) are confidential, the market-clearing results including the optimal schedules ($x$) and prices ($\lambda$ and $\omega$) are disclosed. Given the relationships between the unknown cost vector $c$ and the publicly available data $\omega$ and $\lambda$ provided by \cref{eq:reference_price,eq:lmp_with_c}, it is possible to recover $c$ from the historical values of $\omega$ and $\lambda$ using inverse optimization, provided that the schedules (i.e., $e$, $u$, $x$) and the system parameters (i.e., $x^{\max}$, $x^{\min}$, $f^{\max}$, $\Phi$, $S$) are known. Note that a single observation is not enough to recover all the offer prices, as the offer price of a generator is reflected in LMPs only when the generator is marginal. Hence, data-driven IO has to be used and its performance in noisy settings should be evaluated.

\begin{table}[!t]
  \centering
  \caption{Notations in models \cref{mod:UC} and \cref{mod:DCOPF}} 
    \begin{tabular}{cc}
    \toprule
    Notation 	& Meaning	 \\
    \midrule
    $c \in \mathbb{R}^n$ & offer prices of generators \\
    $x \in \mathbb{R}^n$ & output power of generators \\
    $u \in \mathbb{R}^n$ & commitment status of generators \\
    $e \in \mathbb{R}^m$ & net load (demand minus renewable injections)\\
    $x^{\max} \in \mathbb{R}^n$ & upper bound of $x$ \\
    $x^{\min} \in \mathbb{R}^n$ & lower bound of $x$ \\
    $\Phi \in \mathbb{R}^{l \times m}$ & power transfer distribution factor (PTDF) matrix \\
    $S \in \mathbb{R}^{m \times n}$ & indicator of generators' connection to nodes \\
    $f^{\max}\in \mathbb{R}^l$  & thermal capacity of transmission lines\\
    \bottomrule
    \end{tabular}%
  \label{tab:notations}
\end{table}

\section{Data-Driven IO Formulation}
\label{sec:Proposed_model}

Assume that multiple observations of schedules $\{x^0_i\}_{i \in \set{I}}$ and prices $\{\lambda^0_i,\omega^0_i\}_{i \in \set{I}}$ are given in the market-clearing results, where $\set{I}$ denotes the set of training data indexed by $i$. Using the data-driven IO formulation in \cref{mod:NIO_pred_multi}, the price recovery problem can be formulated based on primal variable $x$:
\begin{align}
   \min_{c,\{{x_i}\}_{i \in \set{I}}} \ \sum\nolimits_{i \in \set{I}} \left\|x_i^0 - {\rm{arg}} \min_{x_i} \ \textbf{DCOPF}(c)\right\|_p, \label{mod:NIO_form_1}
\end{align}
or based on dual variable $\lambda$:
\begin{align}
   \min_{c,\{{\lambda_i}\}_{i \in \set{I}}} \ \sum\nolimits_{i \in \set{I}} \left\|\lambda_i^0 - {\rm{arg}} \min_{\lambda_i} \ \textbf{DCOPF}(c)\right\|_p. \label{mod:NIO_form_2}
\end{align}
To bypass the complexity of bi-level problems, in this section we propose a data-driven IO formulation that incorporates the unknown cost vector $c$ into the objective function. We design the corresponding solution method based on gradient descent and prove its convergence to the global optimum. Additionally, we analyze the robustness of the data-driven IO in two different noisy settings, including (i) random errors in historical LMPs and (ii) mismatch between DCOPF and the actual market-clearing model.

\subsection{Data-Driven IO in Deterministic Settings}
\label{subsec:IO_no_errors}

We start from the single-observation case in a deterministic setting, where the observed solutions $\lambda^0$, $\omega^0$ and $x^0$ are assumed to be the optimal solutions of \cref{mod:DCOPF} and therefore satisfy the KKT conditions in \cref{mod:matrix_KKT}. From \cref{matrix_kkt:pg}, we know:
\begin{equation}
   c = \lambda J_{n,1} -\alpha +\beta + S^T \Phi^T(\nu -\mu).  \label{eq:c_expression}
\end{equation}
From \cref{eq:lmp_without_c}, we know:
\begin{equation}
  \Phi^T (\nu-\mu) = \omega - J_{m,1} \lambda . 
\label{eq:mu_nu_phi}
\end{equation}
Substituting \cref{eq:mu_nu_phi} into \cref{eq:c_expression} returns:
\begin{equation}
   c = \lambda (J_{n,1} - S^T J_{m,1}) + S^T \omega -\alpha +\beta. \label{eq:c_expression_new}
\end{equation}
which is a closed-form relationship between the unknown parameters $c$ and the observed solutions $\lambda^0$, $\omega^0$ and $x^0$. 
Thus, \cref{eq:c_expression_new} can be used to compute $c$ if the true values of $\alpha$ and $\beta$ are known. Normally, the values of dual multipliers $\alpha$, $\beta$, $\mu$, and $\nu$ are not given directly in the market-clearing results. Nonetheless, the binding status of constraints \cref{DCOPF:pg_max,DCOPF:pg_min,DCOPF:flow_max,DCOPF:flow_min} can be determined based on the published power system schedules; that is, if a constraint is non-binding, its corresponding dual multiplier is zero. Therefore, by only considering free generators whose capacity constraints \cref{DCOPF:pg_min} and \cref{DCOPF:pg_max} are not binding (i.e., $\alpha$ and $\beta$ are zero), the offer prices of these generators can be calculated directly by:
\begin{equation}
   c^0 = \lambda^0 (J_{n,1} - S^T J_{m,1}) + S^T \omega^0. \label{eq:c_deterministic}
\end{equation}
For generators with binding power constraints, their offer prices cannot be calculated by \cref{eq:c_deterministic}, so their corresponding values in vector $c^0$ are set to zero. 
By considering more scenarios with different free generators, the offer prices of more generators can be recovered. 
Note that although the generation schedule $x^0$ is not explicitly included in \cref{eq:c_deterministic}, it plays a crucial role in identifying the free generators. It is also worth noting that no information about the power transmission system, including the PTDF matrix $\Phi$ and the thermal capacity of transmission lines $f^{\max}$, is required in \cref{eq:c_deterministic}, which means that we can recover the offer prices of generators without assuming full knowledge of system parameters. If certain system parameters are unknown, we can recover them by applying another IO model after we have recovered the offer prices $c$ following this paper.

Then, we extend \cref{eq:c_deterministic} to the multi-observation case in a deterministic setting. Recall that the offer price of a generator may vary at different times due to external factors. In this case, while the observed solutions still satisfy the KKT conditions in \cref{mod:matrix_KKT}, calculating $c^0$ based on different sets of $\lambda^0$ and $\omega^0$ would yield different offer prices for the same generator. To address this, we propose the following data-driven IO formulation. Given multiple observed solutions $\{\lambda^0_i,\omega^0_i\}_{i \in \set{I}}$ where $\set{I}$ is the set of training data indexed by $i$, the objective of data-driven IO is:
\begin{equation}
  \min_{\hat c} \ l(\hat c)= \sum\nolimits_{i \in \set{I}} \| c_{i}^{0} - \hat c \|_p^p. \label{eq:data_driven_objective}
\end{equation}
where $c_{i}^{0}$ is calculated by substituting $\lambda^0_i$ and $\omega^0_i$ into \cref{eq:c_deterministic}. The objective of this IO is defined using the $p^\text{th}$-power of the $\ell_p$-norm, which is different from the $\ell_p$-norm used in other IO formulations discussed in Section~\ref{sec:IO_review}. This choice will aid in proving the convergence and uniqueness of the global optimum in the following subsections by enforcing strict convexity.

Note that the data-driven IO in \cref{eq:data_driven_objective} is a single-level problem, so it has lower computational complexity compared to the bi-level data-driven IO in \cref{mod:NIO_form_1,mod:NIO_form_2}. This advantage comes from the availability of a closed-form relationship between the unknown parameter $c^0$ and the observed solutions $\lambda^0$ and $\omega^0$ as specified in \cref{eq:c_deterministic}. Although this relationship is insightful in the deterministic setting, it does not hold in some noisy settings, which will be discussed in section~\ref{subsec:Robustness}.

Since only the offer prices of free generators can be recovered from (21), $c_i^0$ is a sparse vector. The zero elements in vector $c_i^0$ do not provide any additional information, so they should be eliminated from the loss function in (22) as:
\begin{equation}
  \min_{\hat c_k} \ l(\hat c_k)= \sum\nolimits_{i \in \set{I}_k} \| c_{i,k}^{0} - \hat c_k \|_p^p, \label{eq:data_driven_objective_modified}
\end{equation}
where $\hat c_k$ is the recovered price of generator $k$, and $\{c_{i,k}^{0}\}_{i \in \set{I}_k}$ is the valid (i.e., non-zero) training data for generator $k$. In this case, the zero elements in vector $c_i^0$ will not contribute to the loss function.

We can use the gradient descent (GD) method to iteratively update the values of $\hat c$. The update rule for $\hat c_k$ is:
\begin{align}
  \hat c_{k}^{j+1} = \hat c_{k}^{j} - \frac{\eta}{N_k} \sum\nolimits_{i=1}^{N_k} \frac{\partial \|c_{i,k}^0 - \hat c_{k}^{j} \|_p^p}{\partial \hat c_{k}^{j}},\ \forall k \label{eq:update_rule}
\end{align}
where $\hat c_{k}^{j}$ is the estimated offer price of generator $k$ during the $j^{\rm{th}}$ iteration, $\eta$ is the learning rate, $N_k$ is the amount of valid training data for generator $k$. Specifically, if we use $\ell_2$-norm in the loss function, i.e., $p=2$, there is a closed-form solution for $\hat c_k$, which is the mean of all the training data $\{c_{i,k}^{0}\}_{i \in \set{I}_k}$. 

Algorithm~\ref{alg:proposed} outlines the procure of solving the proposed data-driven IO model based on GD.
To enhance computational performance, stochastic gradient descent (SGD) can be used when dealing with large training data sets or when the price recovery task becomes an online learning problem.

\begin{algorithm}[t]
    \small
    \SetAlgoLined
    \SetKwInOut{Input}{input}\SetKwInOut{Output}{output}
    \Input{Observed market-clearing results $\{\lambda^0_i,\omega^0_i,x^0_i\}_{i \in \set{I}}$;\\
    Generator parameters $S$, $x^{\max}$, $x^{\min}$;\\
    }
    \Output {Recovered offer price $\hat c_k$ for $\forall \ k \in [1,n]$;
    }
    \Begin{
      \For{$i \in \set{I}$}
      {Identify free generators based on $x^0_i$, $x^{\max}$ and $x^{\min}$; \\
      Following \cref{eq:c_deterministic}, use $\lambda^0_i$, $\omega^0_i$ and $S$ to calculate $c^0_i$; \\
      For non-free generators, change the corresponding elements in vector $c^0_i$ to 0;\\
      }
      Following \cref{eq:update_rule}, use GD to find the optimal value of $\hat c_k$ based on $\{c^0_{i,k}\}_{i \in \set{I}_k}$ for $\forall \ k \in [1,n]$\\
      \KwRet{$\hat c_{k},\ \forall k \in [1,n]$}\\
     }
    \caption{Data-Driven IO for Offer Price Recovery}
    \label{alg:proposed}
\end{algorithm}

\subsection{Convergence of Data-Driven IO Based on Gradient Descent (GD)}
\label{subsec:convergence}
In this subsection, we prove that the loss function in \cref{eq:data_driven_objective_modified} converges to its optimal value with a sub-linear rate using the gradient descent (GD) method. The established convergence theorem for GD on convex, Lipschitz continuous functions can be found in Appendix \ref{subsec:theorem_convergence}. We first discuss the convexity and Lipschitz continuity of $\left\| x \right\|_p^p$, which are essential to prove the convergence of the proposed data-driven IO formulation.

\begin{lemma}
\label{Lemma:convexity_and_continuity_of_norms}
(Convexity and Lipschitz continuity of $\left\| x \right\|_p^p$) Let
$f(x) = \left\| x \right\|_p^p = \sum\nolimits_{i=1}^n |x_i|^p$. Assuming that $p \ge 1$ and $|x_i| \in [0,\bar x]$ for $\forall i \in [1,n]$, then $f(x)$ is convex and Lipschitz continuous with respect to the 
$\ell_p$-norm, and the Lipschitz constant is $p \bar x^{p-1} n^{p-1/p}$.
\end{lemma}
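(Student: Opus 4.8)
The plan is to treat convexity and Lipschitz continuity separately, since the first is elementary while the second carries the quantitative content of the statement.

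For convexity I would argue coordinatewise. For each $i$ the scalar map $t \mapsto |t|^p$ is convex on $\mathbb{R}$ whenever $p \ge 1$: for $p > 1$ because $\tfrac{d^2}{dt^2}|t|^p = p(p-1)|t|^{p-2} \ge 0$, and for $p = 1$ because $|t|$ is convex. Since $f(x) = \sum_{i=1}^n |x_i|^p$ is a finite sum of convex functions of distinct coordinates, $f$ is convex on $\mathbb{R}^n$.

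For Lipschitz continuity I would use a direct coordinatewise estimate, which has the advantage of covering all $p \ge 1$ uniformly. The mean value theorem applied to $s \mapsto s^p$, together with $\big||x_i| - |y_i|\big| \le |x_i - y_i|$, gives $\big||x_i|^p - |y_i|^p\big| \le p\max(|x_i|,|y_i|)^{p-1}|x_i - y_i| \le p\bar x^{p-1}|x_i - y_i|$, where the last step uses $|x_i|,|y_i| \le \bar x$ and the monotonicity of $s \mapsto s^{p-1}$. Summing over $i$,
\[
|f(x) - f(y)| \le p\bar x^{p-1}\sum_{i=1}^n |x_i - y_i| = p\bar x^{p-1}\,\|x - y\|_1 .
\]
Converting to the $\ell_p$-norm via the standard inequality $\|z\|_1 \le n^{1-1/p}\|z\|_p$ (Hölder with conjugate exponent $q = p/(p-1)$) yields $|f(x)-f(y)| \le p\bar x^{p-1} n^{(p-1)/p}\|x-y\|_p$, exactly the claimed constant. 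Equivalently, since $f$ is convex and (for $p>1$) differentiable with $\partial f/\partial x_i = p|x_i|^{p-1}\operatorname{sgn}(x_i)$, its Lipschitz constant in $\ell_p$ equals $\sup_z \|\nabla f(z)\|_q$; bounding each component by $p\bar x^{p-1}$ gives $\|\nabla f(z)\|_q \le p\bar x^{p-1} n^{1/q}$, the same value, and this dual-norm viewpoint ties directly into the GD convergence theorem used afterward.

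I expect the main obstacle to be purely the exponent bookkeeping: one must identify the conjugate exponent $q = p/(p-1)$ so that $1/q = (p-1)/p$, and confirm that the norm-conversion factor $n^{1-1/p}$ coincides with $n^{(p-1)/p}$. The per-coordinate bound also leans on $p \ge 1$ twice—once so that $s \mapsto s^p$ has the stated derivative and once so that $s \mapsto s^{p-1}$ is nondecreasing, letting $\max(|x_i|,|y_i|)^{p-1}$ be replaced by $\bar x^{p-1}$. For $p = 1$ the direct estimate needs no change, whereas the dual-norm argument would require subgradients in place of $\nabla f$.
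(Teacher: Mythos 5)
Your proof is correct, and it arrives at the paper's exact constant by a genuinely different route. The paper argues by factoring $f = h \circ g$ with $g(x) = \left\| x \right\|_p$ and $h(s) = s^p$: it first shows every norm is $1$-Lipschitz with respect to itself (triangle inequality), then bounds $h'$ on the range $[0, \bar x n^{1/p}]$ of $g$ via the mean value theorem to get $\sup |h'| = p(\bar x n^{1/p})^{p-1} = p\bar x^{p-1} n^{(p-1)/p}$, and finally multiplies the two constants through a lemma on compositions of Lipschitz functions. You instead work coordinatewise: the mean value theorem applied to $s \mapsto s^p$ on each coordinate gives $\bigl| |x_i|^p - |y_i|^p \bigr| \le p\bar x^{p-1} |x_i - y_i|$, summing yields the $\ell_1$ bound $|f(x) - f(y)| \le p\bar x^{p-1} \left\| x - y \right\|_1$, and H\"older's comparison $\left\| z \right\|_1 \le n^{1-1/p} \left\| z \right\|_p$ converts this to the $\ell_p$-norm --- producing the identical constant, since $n^{1-1/p} = n^{(p-1)/p}$ (your bookkeeping incidentally disambiguates the paper's exponent notation $n^{p-1/p}$, which the paper's own computation confirms means $n^{(p-1)/p}$). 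Your route is more self-contained, needing neither the composition lemma nor the separate $1$-Lipschitzness-of-norms lemma, and it covers $p = 1$ with no special handling; the paper's route buys modularity, packaging the argument into three small reusable appendix lemmas. Your closing dual-norm observation --- that for differentiable $f$ the $\ell_p$-Lipschitz constant is $\sup_z \left\| \nabla f(z) \right\|_q$ with $q = p/(p-1)$, recovering the same value and requiring subgradients only at $p = 1$ --- is a sound consistency check that the paper does not make, and it connects naturally to the gradient-descent convergence theorem the lemma serves. The convexity half is essentially identical in both proofs (a sum of convex coordinate functions), with your second-derivative justification replacing the paper's citation.
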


The proof of Lemma~\ref{Lemma:convexity_and_continuity_of_norms} can be found in Appendix~\ref{subsec:lemma_4_2}. Based on Theorem~\ref{Theorem:GD_convergence} and Lemma~\ref{Lemma:convexity_and_continuity_of_norms}, we can propose the convergence theorem for the data-driven IO in Section~\ref{subsec:IO_no_errors}:
\begin{theorem}
\label{Theorem:IO_convergence}
(Convergence of data-driven IO) Consider the loss function $l(c)$ in \cref{eq:data_driven_objective_modified}, where $c \in \set{C} \subseteq \mathbb{R}^n$ and $\|\set{C}\| \le B$. Assume that the $\ell_p$-norm in $l(c)$ satisfies $p\ge 1$ and $|c_k| \in [0,\bar c]$ for $\forall k \in [1,n]$. Let $c^{*}$ be an optimal solution of $\min_{c \in \set{C}} l(c)$ and let $\hat c$ be an output of applying the GD algorithm on $l(c)$. For $\forall \epsilon >0$, to achieve $l(\hat c) - l(c^*) \le \epsilon$, it is sufficient to perform GD for $T$ iterations with a learning rate of $\eta$, where $T \ge \frac{n^2 p^2 \bar c^{2p}}{\epsilon^2}$ and $\eta = (p \bar c^{p-2} n^{p-2/p} \sqrt{T})^{-1} $.
\end{theorem}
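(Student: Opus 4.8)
The plan is to instantiate the generic gradient-descent guarantee, Theorem~\ref{Theorem:GD_convergence}, for the particular objective $l(c)$ in \cref{eq:data_driven_objective_modified}. That theorem requires only two structural properties of the objective---convexity and Lipschitz continuity with respect to the $\ell_p$-norm---both of which are furnished by Lemma~\ref{Lemma:convexity_and_continuity_of_norms}; the remainder of the argument is the bookkeeping needed to pin down the Lipschitz constant $L$, a bound $R$ on the distance the iterates must travel, and then the choice of step size $\eta$ and horizon $T$ that drive the suboptimality below $\epsilon$. First I would establish convexity: each summand $\|c_i^0 - c\|_p^p$ is the composition of the map $z \mapsto \|z\|_p^p$, convex by Lemma~\ref{Lemma:convexity_and_continuity_of_norms}, with the affine map $c \mapsto c_i^0 - c$, and a nonnegative average of convex functions is convex, so $l$ is convex on $\set{C}$.

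Next, the Lipschitz constant. The key observation is that the update \cref{eq:update_rule} carries the factor $1/N_k$, so gradient descent is effectively run on the averaged loss $\tfrac{1}{N_k}\sum_i |c_{i,k}^0 - \hat c_k|^p$ rather than the raw sum; this is what prevents $L$ from scaling with the sample size. Since each observation obeys $|c_{i,k}^0 - c_k| \le \bar c$ (both quantities lie in $[0,\bar c]$), Lemma~\ref{Lemma:convexity_and_continuity_of_norms} applied with $\bar x = \bar c$ shows that each summand, and hence their average, is Lipschitz with constant $L = p\,\bar c^{\,p-1} n^{\,p-1/p}$ in the $\ell_p$-norm.

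It then remains to bound how far the iterates must move and to optimize the two free parameters. The box constraint $|c_k|\le \bar c$ confines $c$ and $c^*$ to a set of $\ell_p$-radius $R = \bar c\, n^{1/p}$, which bounds $\|c^1 - c^*\|_p$. Feeding $L$ and $R$ into Theorem~\ref{Theorem:GD_convergence} yields a suboptimality bound of the standard form $l(\hat c) - l(c^*) \le \tfrac{R^2}{2\eta T} + \tfrac{\eta L^2}{2}$; balancing the two terms with $\eta = R/(L\sqrt{T})$ collapses this to $RL/\sqrt{T}$, and requiring $RL/\sqrt{T} \le \epsilon$ gives $T \ge R^2 L^2/\epsilon^2$. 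Substituting $R = \bar c\, n^{1/p}$ and the Lemma's $L$ reproduces exactly the stated step size $\eta = (p\,\bar c^{\,p-2} n^{\,p-2/p}\sqrt{T})^{-1}$, and substituting both constants into $R^2 L^2/\epsilon^2$ and simplifying yields the claimed iteration count.

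The main obstacle is the constant bookkeeping rather than any deep inequality: one must correctly propagate the Lemma's Lipschitz constant through the averaging in \cref{eq:update_rule} so that $L$ is independent of $N_k$, correctly identify the $\ell_p$-radius $R = \bar c\, n^{1/p}$ of the admissible box, and then track the powers of $n$, $\bar c$, and $p$ so that the optimized $\eta$ and $T$ line up with the stated expressions. A secondary point to verify is that the guaranteed point in Theorem~\ref{Theorem:GD_convergence} (typically the running average or best iterate) is precisely the quantity reported as the algorithm's output $\hat c$, so that the derived bound genuinely applies to it.
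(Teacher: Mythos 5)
Your proposal is correct and follows essentially the same route as the paper's proof: apply Lemma~\ref{Lemma:convexity_and_continuity_of_norms} to get convexity and the Lipschitz constant $\rho = p\,\bar c^{\,p-1} n^{(p-1)/p}$, take $B = \bar c\, n^{1/p}$ from the box constraint, and plug both into the generic GD guarantee of Theorem~\ref{Theorem:GD_convergence} with $\eta = B/(\rho\sqrt{T})$ to obtain the stated $\eta$ and $T \ge B^2\rho^2/\epsilon^2 = n^2 p^2 \bar c^{2p}/\epsilon^2$. Your explicit handling of the $1/N_k$ averaging in \cref{eq:update_rule} (so that the Lipschitz constant does not scale with the sample size) is a detail the paper's proof leaves implicit, and is a welcome clarification rather than a deviation.
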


\begin{proof}
Since $|c_k| \le \bar c$ for $\forall k$, we have $B = (\sum\nolimits_{i=1}^n \bar c^p) ^{1/p} = \bar c n^{1/p}$.
Based on Lemma~\ref{Lemma:convexity_and_continuity_of_norms}, we can prove $l(c)$ is convex and $\rho$-Lipschitz continuous and the Lipschitz constant is $\rho = p \bar c^{p-1} n^{p-1/p}$. Therefore, $B^2 \rho^2 =  n^2 p^2 \bar c^{2p}$. According to Theorem~\ref{Theorem:GD_convergence}, we know that:
\begin{equation}
    l(\hat c) - l(c^{*}) \le \frac{B \rho}{\sqrt{T}} =  \frac{n p \bar c^{p}}{\sqrt{T}}. \label{eq:IO_convergence}
\end{equation}
Therefore, $T \ge \frac{n^2 p^2 \bar c^{2p}}{\epsilon^2}$ can ensure $\epsilon \le n p \bar c^{p} /\sqrt{T}$.
\end{proof}

Specifically, if $\ell_1$-norm is used in $l(c)$, i.e., when $p=1$, the error bound in \cref{eq:IO_convergence} becomes $\epsilon \le n \bar c /\sqrt{T}$. If the loss function is strongly convex, for example when $\ell_2$-norm is used, the convergence rate could be greater \cite{rakhlin2011making}. To satisfy additional limits on the value of $\hat{c}_k$, inequality constraints can be introduced into the unconstrained optimization model in \cref{eq:data_driven_objective_modified}. The resulting problem can then be solved using projected gradient descent. However, for the convergence theorem to hold, it is crucial that the feasible set of $\hat{c}_k$, as defined by the inequality constraints, be both closed and convex \cite{vu2022asymptotic}.

\subsection{Existence and Uniqueness of Global Optimal Solution to Data-Driven IO}
\label{subsec:existence_and_uniqueness}
This subsection focuses on demonstrating the existence and uniqueness of the global optimal solution to the data-driven IO formulation in \cref{eq:data_driven_objective_modified}. The established theorems for the existence and uniqueness of optimal solutions to convex problems can be found in Appendix \ref{subsec:theorem_Uniqueness}. First we prove the strict convexity of $\left\| x \right\|_p^p$, which is essential to prove the uniqueness of global optimum to the proposed data-driven IO formulation.

\begin{lemma}
\label{Lemma:strict_convexity_of_norm}
(Strict convexity of $\left\| x \right\|_p^p$) Following the notations in Lemma~\ref{Lemma:convexity_and_continuity_of_norms}, if $p >1$ and $x_i \ge 0$ for $\forall i \in [1,n]$, then $f(x) = \left\| x \right\|_p^p$ is strictly convex.
\end{lemma}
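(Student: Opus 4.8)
The plan is to exploit the separable structure of $f$. Since $x_i \ge 0$, we have $|x_i|^p = x_i^p$, so $f(x)=\sum_{i=1}^n g(x_i)$ with $g(t)=t^p$ a single-variable function on $[0,\bar x]$. The strategy is two-fold: first establish that the scalar building block $g(t)=t^p$ is strictly convex on $[0,\infty)$ for $p>1$, and then show that a coordinate-wise (separable) sum of strictly convex functions is itself strictly convex on the nonnegative orthant. Convexity of $f$ is already available from Lemma~\ref{Lemma:convexity_and_continuity_of_norms}, so only the strictness needs to be upgraded.

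For the scalar step, I would differentiate: $g'(t)=p t^{p-1}$ and $g''(t)=p(p-1)t^{p-2}$, which is strictly positive for every $t>0$ when $p>1$. This immediately yields strict convexity of $g$ on the open half-line $(0,\infty)$. The only delicate point is the endpoint $t=0$, where $g''$ either diverges (if $1<p<2$) or vanishes (if $p>2$), so the second-derivative test alone does not certify strict convexity at $0$. I would close this gap using the equivalent criterion that a function continuous on $[0,\bar x]$ whose derivative $g'(t)=p t^{p-1}$ is strictly increasing on the whole interval is strictly convex there; applying the mean value theorem to the two subintervals $(x,z)$ and $(z,y)$ determined by any $x<y$ and $z=\theta x+(1-\theta)y$ produces two intermediate slopes $g'(\xi_1)<g'(\xi_2)$, and rearranging the resulting slope inequality gives the strict convexity inequality $g(z)<\theta g(x)+(1-\theta)g(y)$. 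Because the open subintervals $(x,z)$ and $(z,y)$ always lie inside $(0,\infty)$ even when $x=0$, this argument handles the boundary cleanly.

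For the separable step, take any distinct $x,y$ in the nonnegative box and $\theta\in(0,1)$. Since $x\neq y$, there is at least one index $j$ with $x_j\neq y_j$. For that coordinate the scalar strict convexity gives $g(\theta x_j+(1-\theta)y_j)<\theta g(x_j)+(1-\theta)g(y_j)$, while for every other coordinate ordinary convexity of $g$ gives the corresponding non-strict inequality. Summing these $n$ inequalities, the single strict one forces $f(\theta x+(1-\theta)y)<\theta f(x)+(1-\theta)f(y)$, which is exactly strict convexity of $f$.

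I expect the boundary point $t=0$ to be the only real obstacle: the hypotheses $p>1$ and $x_i\ge 0$ are precisely what is needed, since for $p=1$ the function is affine (hence not strictly convex) and allowing negative arguments would reintroduce the kink of $|t|^p$ at the origin. The separable step is routine once the scalar strict convexity is in hand, the only subtlety being to remember that strictness in a single differing coordinate already suffices to make the summed inequality strict.
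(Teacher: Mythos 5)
Your proof is correct, and it takes a genuinely different---and in one respect more careful---route than the paper's. The paper proves the lemma via the second-order condition: writing $f(x)=\sum_{i=1}^n x_i^p$, it computes the diagonal Hessian with entries $\partial^2 f/\partial x_i^2 = p(p-1)x_i^{p-2}$, observes these are positive for $x_i>0$, and concludes $\nabla^2 f(x)\succ 0$, i.e.\ strict convexity by the second-order test. You instead reduce to the scalar function $g(t)=t^p$, establish its strict convexity on all of $[0,\infty)$ from the strictly increasing derivative $g'(t)=pt^{p-1}$ via the two-subinterval mean value theorem argument, and then lift to $n$ dimensions by the standard observation that a sum of convex terms with a strict inequality in at least one differing coordinate is strictly convex. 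The two proofs are close in spirit (both exploit separability), but yours explicitly repairs a point the paper glosses over: the Hessian test only certifies strict convexity on the open orthant $x_i>0$, whereas the lemma's hypothesis is $x_i\ge 0$; at a boundary point $g''$ is divergent for $1<p<2$ (so $f$ is not even twice differentiable there) or zero for $p>2$ (so $\nabla^2 f$ is singular), and segments lying entirely in a face of the orthant are not reached by interior positive-definiteness alone. Your monotone-derivative argument handles this cleanly, since the intermediate points produced by the mean value theorem always lie in $(0,\infty)$ even when an endpoint is $0$. The cost is a somewhat longer write-up; the benefit is a proof valid on the closed orthant exactly as the lemma is stated.
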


The proof of Lemma~\ref{Lemma:strict_convexity_of_norm} can be found in Appendix~\ref{subsec:lemma_4_6}. Based on Theorems~\ref{Theorem:exist_optimal}-- \ref{Theorem:unique_optimal} and Lemma~\ref{Lemma:strict_convexity_of_norm}, we can propose the following theorem for the data-driven IO formulation in \cref{eq:data_driven_objective_modified}:

\begin{theorem}
\label{Theorem:IO_existence_and_uniqueness}
(Existence and uniqueness of global optimum to data-driven IO) Following the notations in Theorem~\ref{Theorem:IO_convergence}, assuming the $\ell_p$-norm in $l(c)$ satisfies $p > 1$ and $c_k \in [\underline{c},\overline{c}]$ for $\forall k \in [1,n]$ where $\underline{c}>0$, then $\min_{c \in \set{C}} l(c)$ has a unique global optimal solution $c^{*}$, and $\hat c \to c^{*}$ when the iteration $T \to \infty$.
\end{theorem}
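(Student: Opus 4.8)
The plan is to prove the three assertions—existence of a minimizer, its uniqueness, and convergence of the iterates—in that order, reusing the convexity results already assembled. For \textbf{existence}, I would first note that under the hypothesis $c_k \in [\underline{c},\overline{c}]$ for every $k$, the feasible set $\set{C}$ is the box $[\underline{c},\overline{c}]^n$, which is closed and bounded (hence compact) as well as convex. The loss $l(c) = \sum_{k} \sum_{i \in \set{I}_k} |c_{i,k}^0 - \hat{c}_k|^p$ is a finite sum of continuous functions, so it is continuous on $\set{C}$. Invoking Theorem~\ref{Theorem:exist_optimal} (a continuous function attains its minimum over a compact set) then shows that a global minimizer $c^{*}$ exists.

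For \textbf{uniqueness}, the key is strict convexity of $l$ on the convex box $\set{C}$. Because each $\hat{c}_k$ is a scalar that appears only in its own summands, $l$ separates coordinatewise. For a single term, the inner map $\hat{c}_k \mapsto c_{i,k}^0 - \hat{c}_k$ is an invertible affine map of $\mathbb{R}$, and composing a strictly convex function with an invertible affine map preserves strict convexity. Lemma~\ref{Lemma:strict_convexity_of_norm} supplies strict convexity of $t \mapsto |t|^p$ for $t \ge 0$ and $p>1$; since $|t|^p$ is even and, for $p>1$, is $C^1$ with derivative $p\,\mathrm{sign}(t)|t|^{p-1}$ strictly increasing across the origin, strict convexity in fact extends to all of $\mathbb{R}$, so the sign of the argument is immaterial. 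Each summand is therefore strictly convex in $\hat{c}_k$, and a sum of coordinatewise strictly convex functions is strictly convex in $c$ (two distinct points differ in some coordinate, forcing a strict inequality there). Thus $l$ is strictly convex on the convex set $\set{C}$, and Theorem~\ref{Theorem:unique_optimal} yields that $c^{*}$ is unique.

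For \textbf{convergence} $\hat{c} \to c^{*}$ as $T \to \infty$, I would combine the value bound of Theorem~\ref{Theorem:IO_convergence}, namely $l(\hat{c}) - l(c^{*}) \le n p \bar{c}^{p}/\sqrt{T} \to 0$, with the uniqueness just established. Suppose, toward a contradiction, that the GD output $\hat{c}^{(T)}$ does not converge to $c^{*}$. By compactness of $\set{C}$ some subsequence $\hat{c}^{(T_j)}$ converges to a limit $c' \in \set{C}$ with $c' \ne c^{*}$. Continuity of $l$ then gives $l(c') = \lim_j l(\hat{c}^{(T_j)}) = l(c^{*})$, so $c'$ is also a global minimizer, contradicting uniqueness. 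Hence $\hat{c}^{(T)} \to c^{*}$.

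I expect the main obstacle to be bridging the gap between Lemma~\ref{Lemma:strict_convexity_of_norm}, whose hypothesis forces a nonnegative argument, and the loss, whose argument $c_{i,k}^0 - \hat{c}_k$ may be negative; the cleanest resolution is the observation above that $|t|^p$ is strictly convex on the whole real line for $p>1$, after which the affine-composition argument applies verbatim. A secondary subtlety is that convergence in objective value does not, in general, force convergence of the iterates without strong convexity or a quantitative modulus; here the compactness-plus-uniqueness argument sidesteps the need for any such estimate.
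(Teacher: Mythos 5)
Your proposal follows essentially the same route as the paper's proof: existence from continuity of $l$ on the compact box $\set{C}$ via Theorem~\ref{Theorem:exist_optimal}, uniqueness from strict convexity via Lemma~\ref{Lemma:strict_convexity_of_norm} and Theorem~\ref{Theorem:unique_optimal}, and iterate convergence from the value bound of Theorem~\ref{Theorem:IO_convergence} combined with uniqueness. You are, however, more careful than the paper at the two points where its argument is loose, and both of your repairs are correct. First, the paper invokes Lemma~\ref{Lemma:strict_convexity_of_norm} directly even though that lemma assumes a nonnegative argument, whereas the loss applies $|\cdot|^p$ to $c_{i,k}^0 - \hat c_k$, which can be negative no matter how the hypothesis $\underline{c} > 0$ constrains $c_k$; your observation that $t \mapsto |t|^p$ is strictly convex on all of $\mathbb{R}$ for $p>1$ (its derivative $p\,\mathrm{sign}(t)|t|^{p-1}$ is strictly increasing through the origin), combined with invariance of strict convexity under invertible affine composition and coordinatewise summation, closes this gap cleanly. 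Second, the paper deduces $\hat c \to c^{*}$ from $l(\hat c)-l(c^{*})\to 0$ merely by ``considering the uniqueness of $c^{*}$''; since value convergence alone does not imply iterate convergence, your compactness-plus-subsequence argument (any limit point of the GD outputs is a minimizer, hence equals $c^{*}$) is exactly the step the paper omits. One caveat you share with the paper: the coordinatewise strict-convexity argument needs $\set{I}_k \neq \emptyset$ for every $k$ (each generator marginal at least once in the training set), a condition the paper acknowledges only informally in the remark following the theorem, so it is worth stating explicitly as a hypothesis.
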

\begin{proof}
According to Theorem~ \ref{Theorem:unique_optimal} and Lemma~\ref{Lemma:strict_convexity_of_norm}, we know that $l(c)$ is strictly convex and thus has at most one optimal solution in the convex set $\set{C}$. Meanwhile, since the feasible region of $c$ is closed and bounded and $l(c)$ is continuous on $\set{C}$ (Lipschitz continuity is stronger than uniform continuity), we know that $\min_{c \in \set{C}} l(c)$ has at least one global optimum according to Theorem~\ref{Theorem:exist_optimal}. In summary, $\min_{c \in \set{C}} l(c)$ has a unique global optimal solution $c^*$. Given the error bound in Theorem~\ref{Theorem:IO_convergence}, we know that $l(\hat c) - l(c^{*}) \to 0$ when the iteration $T \to \infty$. Considering the uniqueness of $c^*$, we can prove that $\hat c \to c^{*}$ when $T \to \infty$.
\end{proof}

Inspecting Theorems~\ref{Theorem:IO_convergence} and \ref{Theorem:IO_existence_and_uniqueness}, we notice two differences on the assumptions of $l(c)$: Theorem~\ref{Theorem:IO_convergence} assumes $p \ge 1$ and $0\le |c_k|\le \bar c$ for $\forall k$, while Theorem~\ref{Theorem:IO_existence_and_uniqueness} assumes $p > 1$ and $\underline{c} \le c_k\le \overline{c}$ for $\forall k$. Variable $c_k$ represents the offer price of generator $k$, which is greater than its generation cost and capped due to market power regulations. Hence, the assumption of $\underline{c} \le c_k\le \overline{c}$ is realistic. We remark that in some exceptional cases, under heavy production credits, renewable generators can submit negative offer prices; however, this practice is expected to gradually phase out. 

Theorems~\ref{Theorem:IO_convergence} and \ref{Theorem:IO_existence_and_uniqueness} only apply to generators that have been marginal at least once in the training data set. The offer prices of non-marginal generators are eliminated from the loss function $l(c)$ in \cref{eq:data_driven_objective_modified}, so the estimations of these prices remain unchanged from their initial values, rather than converging to an optimal value.

\subsection{Robustness of Data-Driven IO in Noisy Settings}
\label{subsec:Robustness}

\subsubsection{Noisy Setting I: Random Errors in LMPs}
\label{subsec:type_1_errors}

The market-clearing results published by ISOs may not reflect the exact optimal solutions due to incorrect inputs or calculation errors. However, these results are generally \textit{believed} to be accurate and are \textit{accepted} by market participants due to the price validation procedure conducted after the market is closed \cite{LMP_slides}. According to \cref{eq:c_deterministic}, the data-driven IO formulation will not be affected by errors in the observations of primal variable $x$ due to its independence from $x^0$. However, errors in LMPs can compromise the expression of $\omega$ in \cref{eq:lmp_with_c} and therefore the expression of $c^0$ in \cref{eq:c_deterministic}. This kind of noise in data-driven IO is similar to the estimation error in statistical ML which is implied by the fact that the ML model is based on a finite training data set that only partially reflects the true distribution of data. The GIO and NIO formulations in Section~\ref{sec:IO_review} can handle random errors in training data. However, we choose to retain the original form in \cref{eq:data_driven_objective_modified} instead of adapting it to a GIO or NIO formulation due to the sparsity of errors. According to the parameter update rule of GD in \cref{eq:update_rule}, the impact of a single training data point on the final result is small when the training set is large. Meanwhile, the $\ell_1$-norm is robust to outliers. In summary, we can reduce the impact of random errors by using $\ell_1$-norm in the loss function or increasing the amount of training data. We will explore these methods further in Section~\ref{subsec:small_system}.

\subsubsection{Noisy Setting II: Mismatch Between DCOPF and Actual Market-Clearing Model}
\label{subsec:type_2_errors}
As mentioned in Section~\ref{subsec:SCUC}, the actual day-ahead market-clearing is based on the SCUC model instead of the DCOPF model employed in this paper. This mismatch is the second source of noise in the price recovery problem and is comparable to the approximation error in statistical ML (i.e., the difference between the best model within the chosen model class and the optimal model within all model classes). To some extent, the proposed data-driven IO formulation is robust to this noise, as some additional constraints in SCUC do not compromise the validity of \cref{eq:c_deterministic} even though it is derived based on DCOPF. For instance, adding the flexibility reserve requirements to \cref{mod:DCOPF} results in the following DCOPF model:
\begin{subequations}
\begin{align}
   & \min_{x,r^{+},r^{-},\lambda,\alpha,\beta,\mu,\nu,\theta^{+},\theta^{-}}\ c^T x \label{DCOPF_reserve:obj}\\
\text{s.t. } \ 
   & \text{\cref{DCOPF:power_balance,DCOPF:flow_min,DCOPF:flow_max}} \nonumber\\
   &(\theta^{+}): J_{n,1}^T r^{+} \ge R^{+} \label{DCOPF_reserve:upward_reserve}\\
   &(\theta^{-}): J_{n,1}^T r^{-} \ge R^{-} \label{DCOPF_reserve:downward_reserve}\\
   &(\alpha): x+r^{+} \le  u^*x^{\max}. \label{DCOPF_reserve:pg_max} \\
   &(\beta): x-r^{-} \ge  u^*x^{\min}. \label{DCOPF_reserve:pg_min}
\end{align}%
\label{mod:DCOPF_reserve}%
\end{subequations}%
\allowdisplaybreaks[0]%
where $r^{+}$ and $r^{-}$ are the upward and downward flexible capacities provided by generators, and $R^{+}$ and $R^{-}$ are the upward and downward reserve requirements. In this case, the KKT condition in \cref{matrix_kkt:pg} and the LMP definition in \cref{eq:lmp_without_c} still hold, so the expression of $\lambda$ in \cref{eq:reference_price} and the expression of $\omega$ in \cref{eq:lmp_with_c} remain unchanged. This means that $c^0$ can still be calculated based on \cref{eq:c_deterministic} and the objective function in \cref{eq:data_driven_objective_modified} can still be used to recover the cost vector $c$. The only change would be the number of free generators, which is determined by the quantity of generators operating between their minimum and maximum output limits.
The power limits in \cref{DCOPF_reserve:pg_max,DCOPF_reserve:pg_min} are more restrictive compared to \cref{DCOPF:pg_max,DCOPF:pg_min}, resulting in fewer free generators and fewer recoverable offer prices from a single observation. However, these additional constraints also create more diverse scenarios where different generators are marginal, thus potentially increasing the total number of recoverable offer prices from historical data. Note that some additional constraints may render \cref{eq:c_deterministic} invalid, such as the ramping limits of generators:
\begin{align}
    & (\varphi_t^{+}):  x_{t} +r_{t}^{+} - x_{t-1} +r_{t-1}^{-}\leq R^{up} u_{t-1} + v_{t} x^{\min} , \ \forall t\in\set{T} \label{eq:ramp_up}\\
    & (\varphi_t^{-}):  x_{t-1}+r_{t-1}^{+} - x_{t} +r_{t}^{-} \leq R^{down} u_{t} + w_{t} x^{\min} , \ \forall t\in\set{T}\label{eq:ramp_down}
\end{align}
where $R^{up}$ and $R^{down}$ are the ramp-up and down rates of generators, $v_{t}$ and $w_{t}$ are binary variables denoting the start-up and shut-down decisions at time $t$. In this case, the dual multipliers $\varphi^{+}_t$ and $\varphi^{-}_t$ will be added to the expression of $c$, and their influence on the IO results is determined by the frequency of \cref{eq:ramp_up,eq:ramp_down} being binding. 

The model mismatch problem is a common challenge for all IO formulations and cannot be fully resolved without using a more accurate FO model. We will study the impact of this mismatch on IO results using numerical experiments in Section~\ref{subsec:large_system} and discuss other possible solutions in Section~\ref{sec:conclusion}.

\section{Case Study}
\label{sec:Case_study}

In this section we demonstrate the effectiveness of the proposed data-driven IO based on the IEEE 14-bus system and the NYISO 1814-bus system. Simulations were conducted using Python v3.8 and the Gurobi solver on a standard PC with an Intel i9 processor and 16 GB of RAM. In the IEEE 14-bus system, the data-driven IO model was solved in one minute. With regard to the NYISO system, the solution of the proposed model was obtained in all instances in less than ten minutes.

\subsection{Illustrative Example: IEEE 14-bus system}
\label{subsec:small_system}

We first evaluate the performance of the data-driven IO in the IEEE 14-bus system (details in Appendix~\ref{subsec:IEEE_system}). We assume that each generator sets a piece-wise linear price baseline with five equally divided blocks based on the quadratic power generation cost, as shown in Fig.~\ref{fig:offer_prices}. The actual offer prices fluctuate around the baseline with deviations following a normal distribution $N(\mu,\sigma^2)$, where $\mu = 0$ and $\sigma = 2$ (unit: \$/MWh) are assumed in this case.

\begin{figure}[!t]
    \centering
    \includegraphics[width=0.9\linewidth]{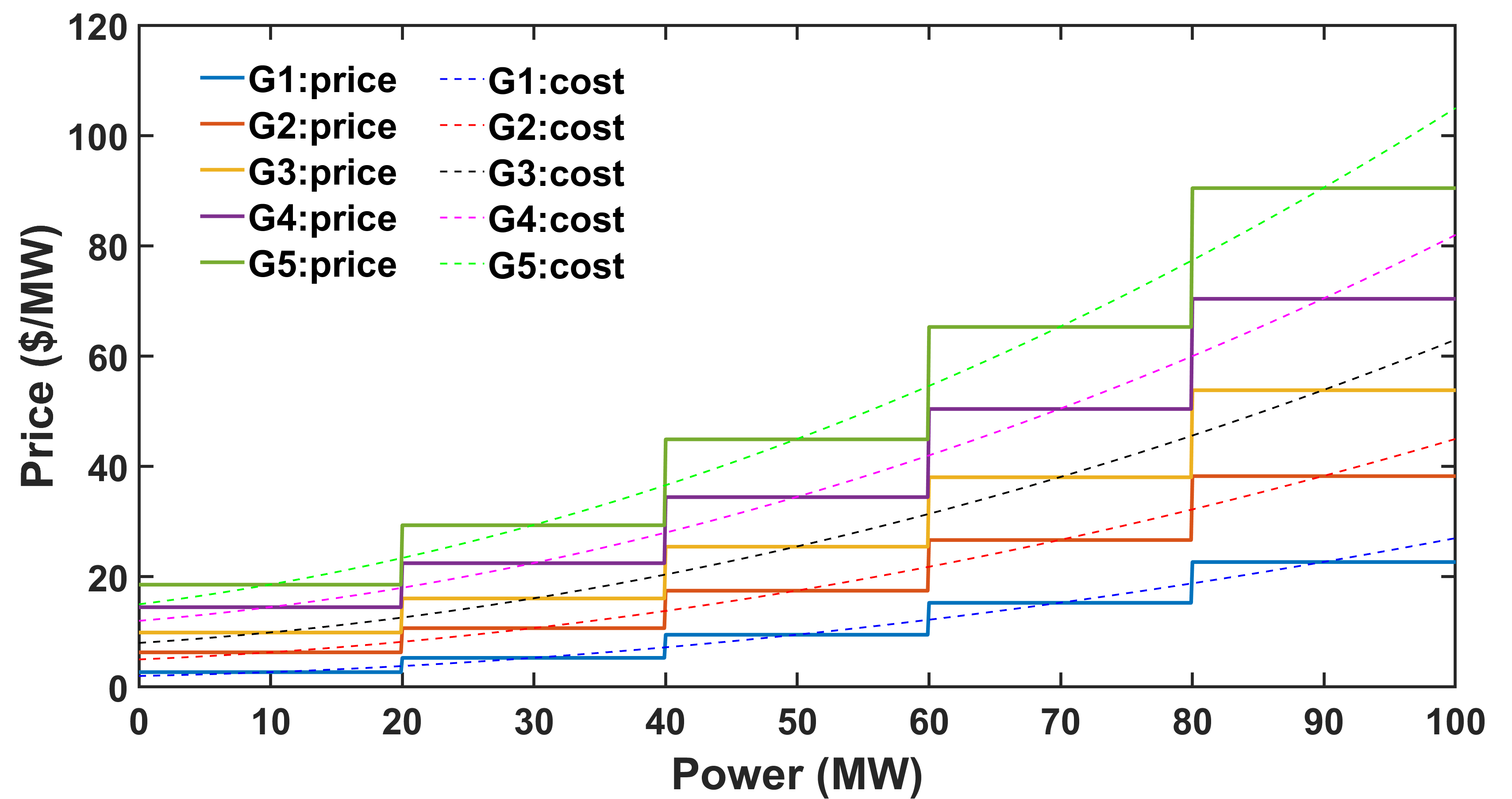}
    \caption{Power generation costs (dashed lines) and price baselines (solid lines) of five generators.}
    \label{fig:offer_prices}
\end{figure}

\subsubsection{Performance in Deterministic Settings}
\label{subsubsec:no_errors}

To recover as many offer prices as possible, we create 200 training data points and validate that each of the five generators is marginal at least once at each block. The initial values of the five blocks are set to 10, 20, 30, 40, and 50, respectively. Fig.~\ref{fig:convergence} shows the variations of four estimated offer prices at each iteration and compares the convergence rates based on the $\ell_1$ and $\ell_2$-norms. In all four cases, the estimated prices converge to the true values, with faster convergence observed when using the $\ell_2$-norm due to its strong convexity.

\begin{figure}[!t]
    \centering
    \includegraphics[width=1\linewidth]{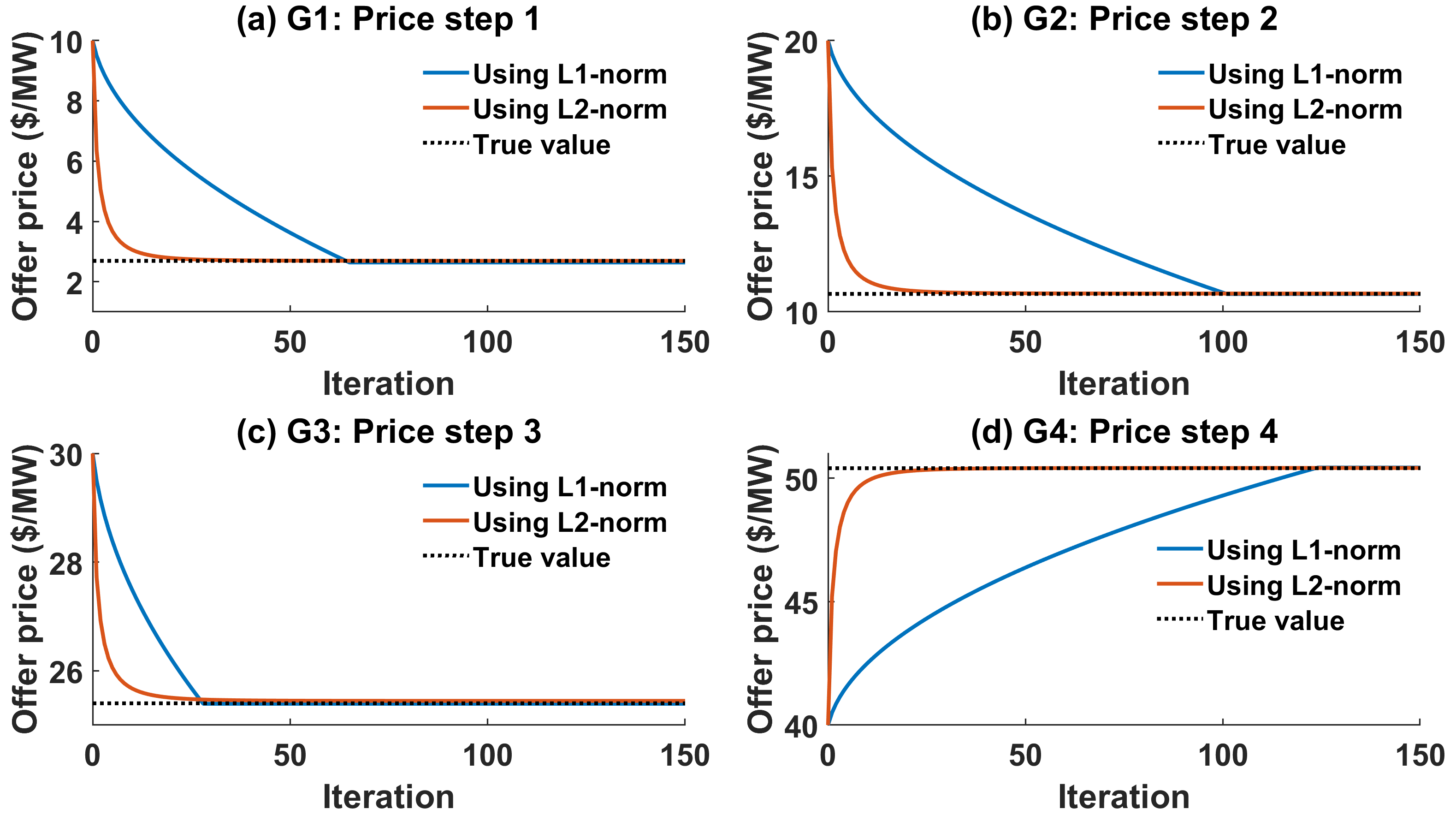}
    \caption{Convergence of estimated offer prices based on $\ell_1$ and $\ell_2$-norms.}
    \label{fig:convergence}
\end{figure}

\subsubsection{Performance with Random Errors in LMPs}
\label{subsubsec:type_1_errors}
To evaluate the performance of data-driven IO in the first noisy setting, we add random errors following a normal distribution $N(\mu,\sigma^2)$ to the original LMPs. Four noise settings are considered with varying error sizes and frequencies. In the small-error case, we assume $\mu = 50$ and $\sigma = 5$, while in the large-error case, we assume $\mu = 100$ and $\sigma = 10$ (unit: \$/MWh). According to \cite{LMP_slides}, the actual frequency of random errors in LMPs is less than 0.1\%, but we consider more challenging cases with 1\% and 5\% error frequencies. We use data-driven IO to recover the offer price of G3 whose true value is 25.445 \$/MWh, and the estimated offer prices and the corresponding errors are shown in Table~\ref{tab:error_1}. The data-driven IO shows robustness to random errors with $\ell_1$-norm and acceptable performance with $\ell_2$-norm when noise is small and sparse.

\begin{table}[!t]
  \centering
  \caption{Estimated offer prices ($\hat c$) and corresponding errors ($\epsilon$) in different noisy settings (true value $c^*= 25.445$ \$/MWh)}
    \begin{tabular}{ccc}
    \toprule
    Noisy settings	& Using $\ell_1$-norm &  Using $\ell_2$-norm	\\
    \midrule
    1\% small errors  & $\hat c =25.56$, $\epsilon =0.45\%$  & $\hat c =25.69$, $\epsilon =0.96\%$ \\
    1\% large errors  & $\hat c =25.56$, $\epsilon =0.45\%$  & $\hat c =26.19$, $\epsilon =2.93\%$ \\
    5\% small errors  & $\hat c =25.56$, $\epsilon =0.45\%$  & $\hat c =26.67$, $\epsilon =4.81\%$ \\
    5\% large errors  & $\hat c =25.56$, $\epsilon =0.45\%$  & $\hat c =29.17$, $\epsilon =14.64\%$ \\
    \bottomrule
    \end{tabular}%
  \label{tab:error_1}
\end{table}

\subsection{Numerical Experiments on NYISO System}
\label{subsec:large_system}

In this numerical experiment, we study the NYISO system with 1814 buses, 2207 lines, 362 generators and 33 wind farms (details in  Appendix~\ref{subsec:NYISO_system}). The structure and parameters of this NYISO system are mined and estimated from publicly available data sources \cite{NYISO_RNA_report}. The NYISO publishes daily LMPs at \cite{NYISO_price} and other market-clearing results every three months at \cite{NYISO_market_clearing_results}. We use this information to generate hourly market-clearing results over a three-month period (February, April, and August 2018), yielding a total of 2136 training data points. Note that in this case we assume that the baseline price of each generator is equally divided into ten price blocks, and the upper and lower bounds of each block are known. However, if the block setting rules are unpublished, we need to estimate the bounds of each block first based on historical data. Subsequently, we can recover the offer price of each block based on the estimated bounds of the blocks.

\subsubsection{Performance in Deterministic Settings}
As previously mentioned in Section~\ref{subsec:IO_no_errors}, a training data point is valid for the price recovery of a generator if the generator is committed and marginal in that scenario. Fig.~\ref{fig:step_stat} summarizes the number of valid training data points for all the generators in the system. As shown in Fig.~\ref{fig:step_stat}, most of the generators only have valid training data at a few steps of their offer prices, thus only a portion of the prices can be recovered. The results indicate that 44.03\% of all offer prices can be recovered from the valid training data, while 81.21\% of these recovered prices are based on less than 5 training data points. Meanwhile, for 85.35\% of the generators, at least one block of price can be recovered. Note that there is an theoretical upper limit on the proportion of recoverable prices because an offer price can only be recovered from the LMPs if it affected the values of LMPs. Therefore, other approaches, such as NIO, also face the same limitation with regards to the recovery rate.

In total, 53 generators (out of 362) are never found to be marginal in the 2163 data points we analyzed, so their offer prices cannot be estimated from the training data.
This result does not indicate a failure of the data-driven IO for large systems, as these non-marginal generators do not typically compete with others. For example, due to a relatively high generation cost, generator No.14 is never committed in this three months. Furthermore, incorporating more training data can decrease the number of unrecoverable generators. As more market-clearing data posted by market operators, the portion of recovered offer prices will increase, which is important for dealing with noisy data. 

\begin{figure}[!t]
    \centering
    \includegraphics[width=1\linewidth]{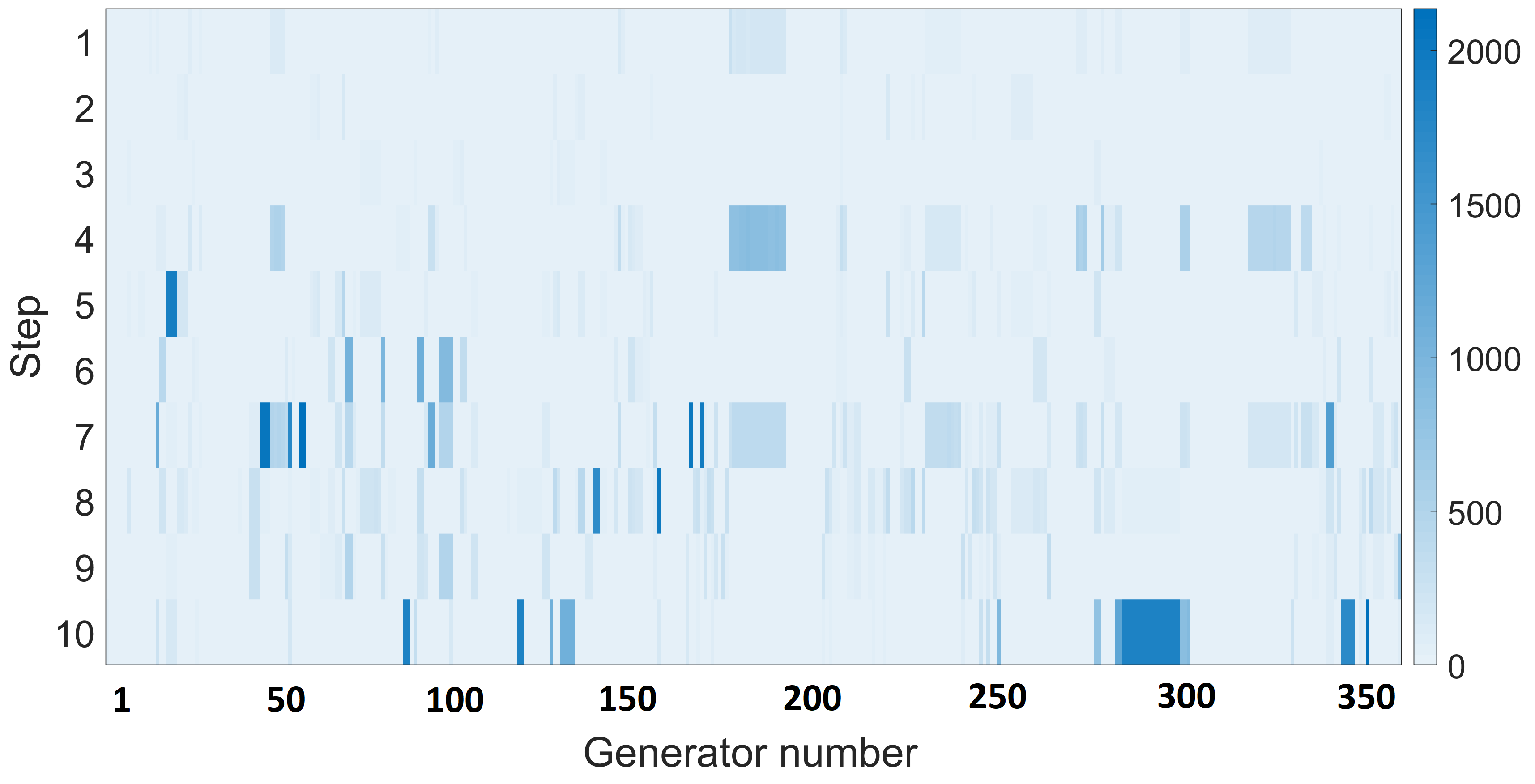}
    \caption{Number of valid training data points for 362 generators.}
    \label{fig:step_stat}
\end{figure}

\subsubsection{Performance with Model Mismatch}
\label{subsubsec:type_2_errors}
To assess the performance of the data-driven IO approach in the presence of model mismatches, we replicate the SCUC model used by NYISO in its day-ahead scheduling process, as outlined in \cite{DA_scheduling_manual}. This SCUC model, as detailed in \cite{mieth2022risk}, includes additional constraints such as ramping limits for generators, which may challenge the accuracy of recovered prices when those constraints are binding.
Fig.~\ref{fig:binding_constraints} shows the binding frequency of two sets of constraints, including the power limits in \cref{DCOPF_reserve:pg_max,DCOPF_reserve:pg_min} and the ramping limits in \cref{eq:ramp_up,eq:ramp_down}. It also compares the results across three months, namely February (with low demand), April (with medium demand), and August (with high demand). As depicted in Fig.~\ref{fig:binding_constraints} (b), the occurrence of only the ramping limits being binding is rare. As a result, the frequency of both sets of constraints being binding is approximately the same as the frequency of only the power limits being binding.

Fig.~\ref{fig:binding_constraints} conveys a crucial message that the ramping limits of free generators (whose power limits are not binding) are typically not binding either. As explained in Section~\ref{subsubsec:type_2_errors}, the ramping limits will only affect the recovered prices when they are binding. Hence, the impact of ramping limits is mostly eliminated from the valid training data set, which only includes the data of free generators. 

With this model mismatch, we are able to recover 36.28\% of all offer prices with an average relative error of 3.47\%. Fig.~\ref{fig:errors} shows the errors in recovered prices at the seventh block where 171 generators are marginal. The errors are compared in the deterministic setting and two noisy settings, where noisy setting I includes additional Gaussian noise to 1\% LMPs data with a mean value of 50 \$/MWh and a standard deviation of 5 \$/MWh, and noise setting II involves the model mismatch discussed above.
The deterministic setting features small and random errors with an average close to zero, while in noisy setting I, sparse noise in the training data results in significant deviations of the recovered prices of some generators from their true values. Noisy setting II features higher errors compared to the other two cases, however, they are still within an acceptable level compared to the offer prices. Note that although the errors in Fig.~\ref{fig:errors} (c) tend to be mostly positive, it is not a universal conclusion since the values of errors in this noisy setting depend on the values of dual multipliers of binding constraints in the SCUC model, which can change over time.

\begin{figure}[!t]
    \centering
    \includegraphics[width=1\linewidth]{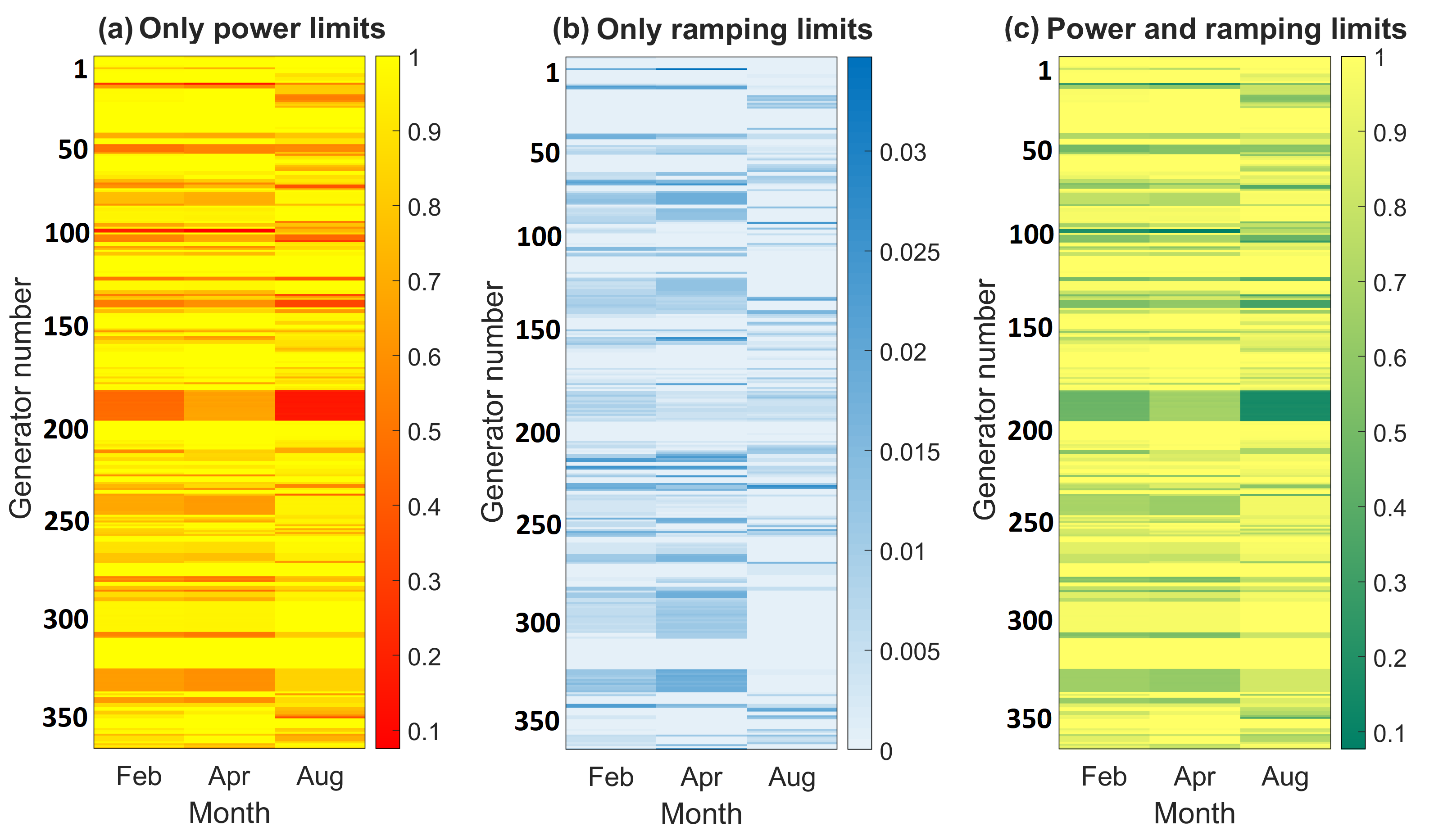}
    \caption{Binding frequency of different sets of constraints.}
    \label{fig:binding_constraints}
\end{figure}

\begin{figure}[!t]
    \centering
    \includegraphics[width=1\linewidth]{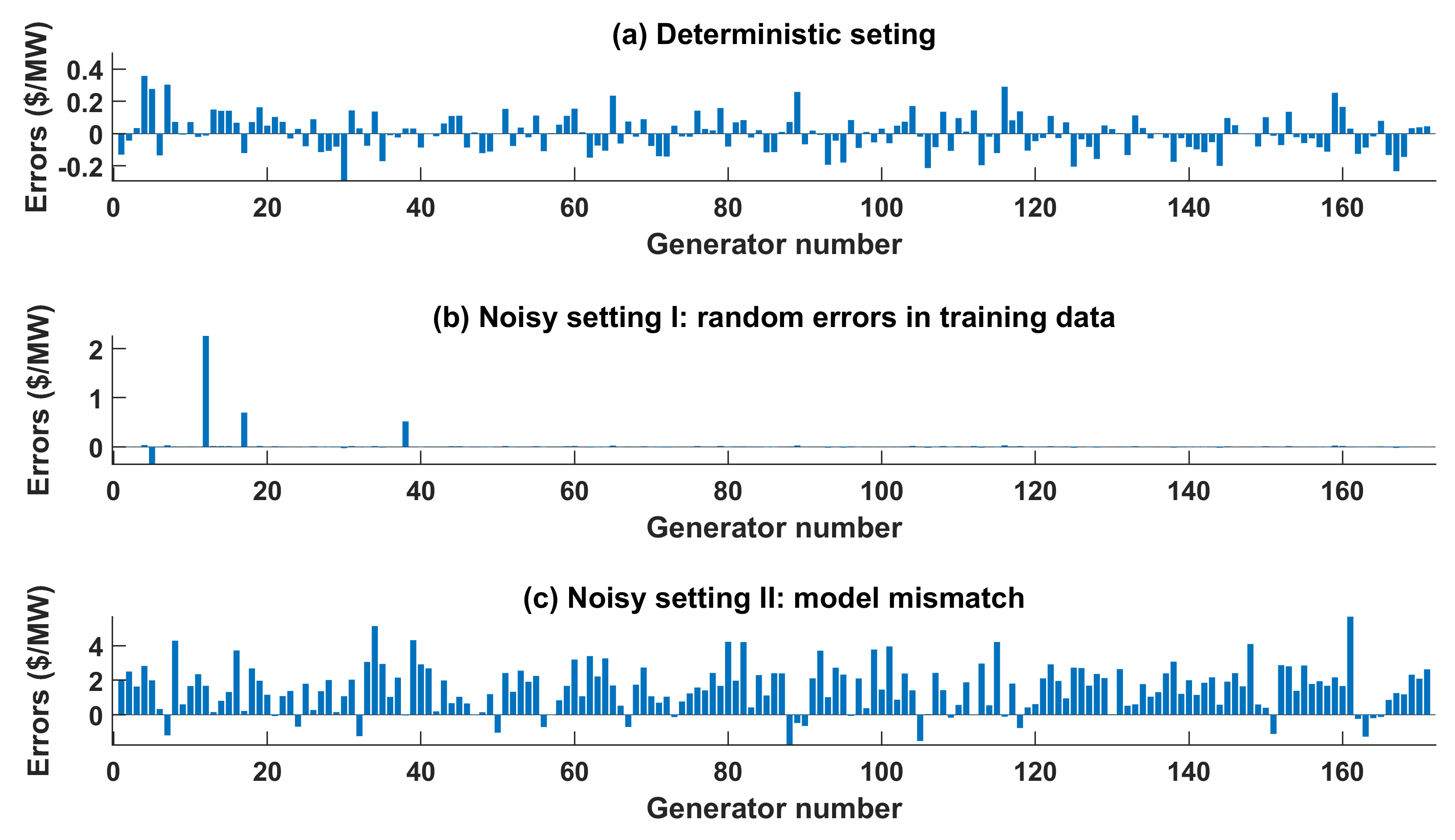}
    \caption{Errors in recovered prices in deterministic and two noisy settings.}
    \label{fig:errors}
\end{figure}

\section{Conclusion and Future Work}
\label{sec:conclusion}
This paper presents a data-driven IO approach for recovering marginal offer prices of generators from day-ahead market-clearing results. We formulate the data-driven IO problem as a single-level optimization model, distinct from the bi-level models that are commonly used in the literature. This data-driven IO problem can be efficiently solved using the gradient descent method. Furthermore, We prove that the recovered offer prices converge to a unique global optimum with a sub-linear rate. Numerical experiments on the IEEE 14-bus system and NYISO 1814-bus system validate the efficacy of the proposed approach in both deterministic and noisy settings. The 14-bus test case shows that the use of $\ell_2$-norm in the loss function results in faster convergence, while the $\ell_1$-norm offers robustness against sparse noise in the training data. In the 1814-bus test case, the data-driven IO recovers approximately 45\% of all offer prices using 2163 training data points. The average relative error is 0.88\% in deterministic setting and 3.47\% considering the mismatch between the DCOPF model and the actual market-clearing process.


A notable limitation of our data-driven IO model is that it assumes complete knowledge of all system parameters except offer prices of generators, which may not be achievable in \textit{some} real-world power systems. Nevertheless, the presence of wholesale markets and stringent regulatory requirements enhance the likelihood of market transparency.
Furthermore, while ISO manuals (such as \cite{DA_scheduling_manual}) provide insights into the market-clearing principles, they do not guarantee complete visibility of the constraints used in the market-clearing process, including out-of-optimization interventions conducted by system operators. As part of our ongoing research, we plan to tackle this issue by implementing a machine learning approach, such as a physics-informed neural network, that can incorporate known constraints while also learning to identify previously unknown constraints from training data. This will significantly reduce the inaccuracies in the recovered prices resulting from model mismatches.

The proposed method for price recovery can lead to increased profits for certain market participants through strategic bidding, but it must not undermine the fairness of power markets. In the event that the price recovery method becomes widespread, the market operator must implement effective measures to prevent market power abuse. Therefore, our future study will concentrate on determining the most efficient policy for market power mitigation and investigating the optimal bidding strategies for market participants. In summary, the game between market participants and market operators will become more intricate and engaging due to the recovery of offer prices.

Furthermore, as depicted in Fig.~\ref{fig:power_market}, the energy market comprises a day-ahead market (DAM) and a real-time market (RTM). This paper only studies the price recovery problem in DAM, while the problem is more challenging in RTM due to the spontaneous actions of market participants and the ad-hoc manipulations of market operators. Uncovering the behavior patterns of market participants in RTM holds potential for both market participants seeking to increase their profits and market operators responsible for preventing market power abuse. In the next step, we intend to expand our data-driven IO approach to RTM and ancillary service markets to leverage all available market data.

\begin{acks}
The authors express their gratitude to Prof. Daniel Bienstock and Dr. Robert Mieth for their assistance in developing the digital twin of the NYISO system, data collection and visualization.
\end{acks}

\bibliographystyle{ACM-Reference-Format}
\bibliography{literature}

\appendix
\section{Established Theorems}

\subsection{Theorem for the Convergence of GD}
\label{subsec:theorem_convergence}
\begin{theorem}
\label{Theorem:GD_convergence} 
   (Convergence of GD for convex and Lipschitz continuous functions) Let $f$ be a convex, $\rho$-Lipschitz function and let $w^* \in {\rm{arg}} \min_{w \in \set{W}} f(w)$, where $\|\set{W}\| \le B$. If the GD algorithm is applied on $f$ for $T$ iterations with a learning rate of $\eta = \sqrt{\frac{B^2}{\rho^2 T}}$, then the output vector $\hat w$ satisfies:
\begin{equation}
   f(\hat w) - f(w^*) \le \frac{B \rho}{\sqrt{T}}. \label{eq:GD_convergence}
\end{equation}
\end{theorem}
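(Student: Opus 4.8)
The plan is to run the standard regret analysis of (sub)gradient descent and then convert the resulting regret bound into a bound on the suboptimality of the \emph{averaged} iterate. Concretely, I would take the GD iterates to follow $w^{(t+1)} = w^{(t)} - \eta g^{(t)}$ with $g^{(t)} \in \partial f(w^{(t)})$, initialize $w^{(1)} = 0$, and define the output as $\hat w = \frac{1}{T}\sum_{t=1}^{T} w^{(t)}$. The entry point is convexity: for each $t$ we have
\[
f(w^{(t)}) - f(w^*) \le \langle g^{(t)}, w^{(t)} - w^* \rangle,
\]
so it suffices to control $\sum_{t=1}^{T}\langle g^{(t)}, w^{(t)} - w^*\rangle$, and this reduces the theorem to a purely algebraic estimate of a single sum.

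The algebraic core is a telescoping identity obtained by expanding the squared distance to the optimum. From the update rule,
\[
\|w^{(t+1)} - w^*\|^2 = \|w^{(t)} - w^*\|^2 - 2\eta\,\langle g^{(t)}, w^{(t)} - w^*\rangle + \eta^2\|g^{(t)}\|^2,
\]
I would solve for the inner product and sum over $t = 1,\dots,T$. The distance terms telescope, leaving
\[
\sum_{t=1}^{T}\langle g^{(t)}, w^{(t)} - w^*\rangle = \frac{\|w^{(1)} - w^*\|^2 - \|w^{(T+1)} - w^*\|^2}{2\eta} + \frac{\eta}{2}\sum_{t=1}^{T}\|g^{(t)}\|^2.
\]
Dropping the nonnegative term $\|w^{(T+1)} - w^*\|^2$, using $\|w^{(1)} - w^*\| = \|w^*\| \le B$ from the domain bound $\|\set{W}\| \le B$, and bounding each subgradient by the Lipschitz constant, $\|g^{(t)}\| \le \rho$, yields the regret bound $\tfrac{B^2}{2\eta} + \tfrac{\eta T\rho^2}{2}$.

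To finish, I would divide the regret bound by $T$ and apply Jensen's inequality to the convex $f$ at the average iterate, giving
\[
f(\hat w) - f(w^*) \le \frac{1}{T}\sum_{t=1}^{T}\bigl(f(w^{(t)}) - f(w^*)\bigr) \le \frac{B^2}{2\eta T} + \frac{\eta\rho^2}{2}.
\]
Minimizing the right-hand side over $\eta$ (its derivative vanishes at $\eta = \sqrt{B^2/(\rho^2 T)}$) recovers exactly the prescribed step size, and substituting it back balances the two terms to give the claimed bound $B\rho/\sqrt{T}$.

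I expect the main obstacle to lie not in any calculation but in handling the feasible set $\set{W}$ correctly. If $\set{W}$ is a proper convex subset of $\mathbb{R}^n$, the raw update must be followed by a Euclidean projection onto $\set{W}$, and the telescoping identity then hinges on the non-expansiveness of that projection, $\|\Pi_{\set{W}}(u) - w^*\| \le \|u - w^*\|$ for $w^* \in \set{W}$, to ensure the inequality survives the projection step. The other point requiring care is fixing the precise reading of $\|\set{W}\| \le B$, namely that every point of $\set{W}$ (hence $w^*$) has norm at most $B$, so that initializing at the origin delivers $\|w^{(1)} - w^*\| \le B$.
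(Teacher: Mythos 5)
Your proposal is correct and coincides with the proof the paper implicitly relies on: the paper omits the argument entirely, deferring to the standard result in its cited reference \cite{shalev2014understanding}, and your derivation --- the telescoping expansion of $\|w^{(t)}-w^*\|^2$, the regret bound $\frac{B^2}{2\eta}+\frac{\eta T \rho^2}{2}$, Jensen's inequality at the averaged iterate, and the balancing step size $\eta=\sqrt{B^2/(\rho^2 T)}$ --- is exactly that textbook proof, including the correct reading of $\|\set{W}\|\le B$ and the non-expansiveness fix for the projected variant. No gaps; nothing further is needed.
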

Since this $\mathcal{O}(1/\sqrt T)$ convergence rate of GD is well-established in the literature, e.g., in \cite{shalev2014understanding}, we omit the proof of Theorem~\ref{Theorem:GD_convergence}. 

\subsection{Theorems for Existence and Uniqueness of Optimal Solutions}
\label{subsec:theorem_Uniqueness}
\begin{theorem}
\label{Theorem:exist_optimal}
(Existence of global optimum)
If the objective function is continuous and the feasible region is closed and bounded, then there exists a global optimum.
\end{theorem}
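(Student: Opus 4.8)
The statement to prove is the classical Weierstrass extreme value theorem, so my plan is the standard minimizing-sequence argument backed by sequential compactness, rather than anything problem-specific. I would treat the minimization case (existence of a global \emph{minimum}) since the maximization case follows by replacing $f$ with $-f$. Write $S \subseteq \mathbb{R}^n$ for the feasible region, $f$ for the (continuous) objective, and set $m = \inf_{x \in S} f(x)$, interpreted a priori as an element of $[-\infty, +\infty)$. The goal is to produce a point $x^* \in S$ with $f(x^*) = m$, which simultaneously certifies that $m$ is finite and that it is attained.

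The argument proceeds in three steps. First, I would invoke the Heine--Borel theorem: a closed and bounded subset of $\mathbb{R}^n$ is compact, hence sequentially compact by Bolzano--Weierstrass; this is the only topological input the proof needs. Second, by definition of the infimum I construct a minimizing sequence $\{x_j\}_{j \ge 1} \subseteq S$ with $f(x_j) \to m$, and apply sequential compactness to extract a subsequence $x_{j_\ell} \to x^*$ with $x^* \in S$, where membership of the limit in $S$ is exactly where \emph{closedness} is used. Third, I pass to the limit using \emph{continuity} of $f$, giving $f(x^*) = \lim_{\ell} f(x_{j_\ell}) = m$; since $f(x^*)$ is a finite real value, this also rules out $m = -\infty$, so $x^*$ is a genuine global minimizer.

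The only place the hypotheses do essential work, and hence the part deserving care rather than true difficulty, is the coordination of the two compactness ingredients: \emph{boundedness} supplies a convergent subsequence via Bolzano--Weierstrass, while \emph{closedness} keeps its limit inside $S$; continuity then transfers convergence from the domain to the function values. I would flag one degenerate case explicitly, namely that $S$ must be nonempty for $m$ and the minimizing sequence to make sense; in the paper's setting the feasible region is assumed nonempty (cf. the standing assumption that $\set{X}(w)$ and $\set{X}^{OPT}(w)$ are non-empty), so this is not a gap, but it should be stated rather than left implicit.
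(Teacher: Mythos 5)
Your proof is correct and complete: it is the standard Weierstrass extreme value argument, with the hypotheses used exactly where they must be (boundedness via Bolzano--Weierstrass for a convergent subsequence, closedness to keep the limit in the feasible set, continuity to pass to the limit in the function values), and you correctly handle the a priori possibility $m=-\infty$ and the nonemptiness degeneracy. There is, however, nothing in the paper to compare it against: the paper deliberately omits the proofs of this theorem and of Theorem~\ref{Theorem:unique_optimal}, deferring to the citation \cite{boyd2004convex}, so your minimizing-sequence argument simply supplies the canonical proof the paper outsources. One small contextual remark: the nonemptiness assumption you invoke via $\set{X}(w)$ and $\set{X}^{OPT}(w)$ pertains to the forward problem of Section~\ref{subsec:forward_model}; the set actually relevant when this theorem is applied in Theorem~\ref{Theorem:IO_existence_and_uniqueness} is the price region $\set{C}$ with $c_k\in[\underline{c},\overline{c}]$, which is nonempty (and closed, bounded, convex) by construction, so your flagged caveat is indeed satisfied where it matters.
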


\begin{theorem}
\label{Theorem:unique_optimal}
(Uniqueness of optimal solutions to strictly convex functions) If the objective function is strictly convex and the feasible region is convex, then there exists at most one optimal solution.
\end{theorem}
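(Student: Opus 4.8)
The plan is to split the claim into three parts—existence of a minimizer, its uniqueness, and the convergence $\hat c\to c^*$—and to assemble each from the cited results (Theorems~\ref{Theorem:exist_optimal}--\ref{Theorem:unique_optimal}, Lemmas~\ref{Lemma:convexity_and_continuity_of_norms} and \ref{Lemma:strict_convexity_of_norm}, and the error bound of Theorem~\ref{Theorem:IO_convergence}). First I would fix the feasible set: under the hypothesis $\underline c \le c_k \le \overline c$ for all $k$, $\set{C}$ is contained in the box $[\underline c,\overline c]^n$, which is closed, bounded, and convex, and in particular compact. Compactness and convexity are precisely the structural properties the existence and uniqueness theorems demand, and the assumption $\underline c>0$ keeps the problem in the realistic, bounded-price regime.

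For existence, I would observe that $l(c)=\sum_{i}\|c_i^0-c\|_p^p$ is a finite sum of $\|\cdot\|_p^p$ terms composed with the affine shifts $c\mapsto c_i^0-c$, hence continuous on $\set{C}$ (one may also invoke the Lipschitz continuity of Lemma~\ref{Lemma:convexity_and_continuity_of_norms}, since Lipschitz continuity implies continuity). A continuous function on the compact set $\set{C}$ attains its minimum, so Theorem~\ref{Theorem:exist_optimal} yields at least one global minimizer.

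For uniqueness, the goal is to promote the plain convexity used in Theorem~\ref{Theorem:IO_convergence} to \emph{strict} convexity. I would argue this termwise: for $p>1$ the scalar map $t\mapsto|t|^p$ is strictly convex (its second derivative $p(p-1)|t|^{p-2}$ is positive off the origin, and strict convexity extends across the single point $t=0$), which is the content of Lemma~\ref{Lemma:strict_convexity_of_norm}. Composing with the injective affine map $c\mapsto c_i^0-c$ preserves strict convexity, and summing strictly convex functions—at least one per coordinate $c_k$, which holds for every generator marginal at least once, as built into the modified loss \cref{eq:data_driven_objective_modified}—keeps the sum strictly convex on the convex set $\set{C}$. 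Theorem~\ref{Theorem:unique_optimal} then gives at most one minimizer; combined with existence this pins down a unique global optimum $c^*$.

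Finally, for the convergence $\hat c\to c^*$, I would start from the bound $l(\hat c)-l(c^*)\le np\overline c^{\,p}/\sqrt{T}$ of Theorem~\ref{Theorem:IO_convergence}, which forces $l(\hat c)\to l(c^*)$ as $T\to\infty$. The delicate step—and the one I expect to be the main obstacle—is upgrading this convergence of objective \emph{values} to convergence of the \emph{iterates}, since strict convexity by itself is only qualitative. I would make the bridge via compactness: any subsequence of $\{\hat c\}\subset\set{C}$ admits a convergent sub-subsequence whose limit $\tilde c$ satisfies $l(\tilde c)=l(c^*)$ by continuity, and uniqueness of the minimizer forces $\tilde c=c^*$; hence every subsequence clusters only at $c^*$, i.e.\ $\hat c\to c^*$. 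Alternatively, a quantitative lower bound derived from the strict-convexity modulus around $c^*$ would yield the same conclusion with an explicit rate.
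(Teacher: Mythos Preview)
Your proposal is not a proof of the stated theorem at all. The statement you were asked to prove is Theorem~\ref{Theorem:unique_optimal}: the general fact that a strictly convex objective over a convex feasible region has at most one minimizer. What you wrote is instead a proof sketch of Theorem~\ref{Theorem:IO_existence_and_uniqueness} (existence, uniqueness, and convergence for the data-driven IO loss $l(c)$). Worse, your argument explicitly \emph{invokes} Theorem~\ref{Theorem:unique_optimal} as a building block (``Theorem~\ref{Theorem:unique_optimal} then gives at most one minimizer''), so as a proof of Theorem~\ref{Theorem:unique_optimal} it is circular.

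The paper itself does not give a proof of Theorem~\ref{Theorem:unique_optimal}; it simply cites Boyd and Vandenberghe and moves on. The standard argument is two lines: suppose $x_1\neq x_2$ are both optimal with value $f^*$; convexity of the feasible region puts $\tfrac12(x_1+x_2)$ in the feasible set, and strict convexity gives $f\bigl(\tfrac12(x_1+x_2)\bigr)<\tfrac12 f(x_1)+\tfrac12 f(x_2)=f^*$, contradicting optimality. That is all that was required here. Your three-part plan, the compactness subsequence argument, and the discussion of the loss $l(c)$ belong to Theorem~\ref{Theorem:IO_existence_and_uniqueness}, not to this statement.
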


We omit the proof of Theorems~\ref{Theorem:exist_optimal} and \ref{Theorem:unique_optimal} in this paper since they are rigorously proved in \cite{boyd2004convex}.

\section{Proofs}


\subsection{Proof of Lemma 4.2}
\label{subsec:lemma_4_2}
We first clarify the definition of Lipschitz continuity and prove three lemmas relevant to the proof of Lemma 4.2.
\begin{definition}
\label{Definition:Lipschitz_functions}
   ($\rho$-Lipschitz functions) Let $\set{W} \subseteq \mathbb{R}^n$ be a convex set. A function $f:\mathbb{R}^n \to \mathbb{R}$ is $\rho$-Lipschitz over $\set{W}$ if there exists a constant $\rho > 0$ that for $\forall w_1, w_2 \in \set{W}$, we have $| f(w_1) - f(w_2) | \le \rho \| w_1 - w_2 \|_p$. 
\end{definition}
Note that $\left\| \cdot \right\|_p$ in the definition of Lipschitz continuity is usually $\ell_2$-norm, but other norms are also applicable \cite{paulavivcius2006analysis}. The statements of "$f$ being $\rho$-Lipschitz" and "$f$ being Lipschitz continuous with Lipschitz constant $\rho$" are equivalent. 

\begin{lemma}
\label{Lemma:Lipschitz_of_norms}
(Lipschitz continuity of norms) Every norm on $\mathbb{R}^n$ is 1-Lipschitz with respect to the same kind of norm.
\end{lemma}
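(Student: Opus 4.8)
The plan is to establish the claim directly from the two norm axioms that matter here: the triangle inequality and absolute homogeneity. Writing $f(w) = \|w\|$ for the norm in question, the goal is to show that for all $w_1, w_2 \in \mathbb{R}^n$ we have $|f(w_1) - f(w_2)| \le \|w_1 - w_2\|$, which is exactly the statement that $f$ is $1$-Lipschitz with respect to $\|\cdot\|$ itself in the sense of \cref{Definition:Lipschitz_functions}.

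First I would apply the triangle inequality to the identity $w_1 = (w_1 - w_2) + w_2$, which gives $\|w_1\| \le \|w_1 - w_2\| + \|w_2\|$, and hence $\|w_1\| - \|w_2\| \le \|w_1 - w_2\|$. By the symmetric decomposition $w_2 = (w_2 - w_1) + w_1$ I would likewise obtain $\|w_2\| - \|w_1\| \le \|w_2 - w_1\|$. Invoking absolute homogeneity in the form $\|-v\| = |-1|\,\|v\| = \|v\|$ with $v = w_1 - w_2$ shows $\|w_2 - w_1\| = \|w_1 - w_2\|$, so the two one-sided estimates together bound both $\|w_1\| - \|w_2\|$ and its negative by the single quantity $\|w_1 - w_2\|$.

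Combining them yields the reverse triangle inequality $\big|\,\|w_1\| - \|w_2\|\,\big| \le \|w_1 - w_2\|$, which is precisely the Lipschitz bound with constant $\rho = 1$. There is no genuine obstacle in this argument, as it is simply the standard reverse triangle inequality; the only point worth flagging is that \cref{Definition:Lipschitz_functions} requires the \emph{same} norm to appear on both sides, so the homogeneity step guaranteeing $\|w_2 - w_1\| = \|w_1 - w_2\|$ is exactly what pins the constant down to $1$ rather than to some larger finite value. This self-referential Lipschitz property is also what we will chain together with the block-decomposition estimates in the proof of \cref{Lemma:convexity_and_continuity_of_norms}.
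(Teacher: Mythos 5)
Your proof is correct and follows essentially the same route as the paper's: the reverse triangle inequality obtained by decomposing $w_1 = (w_1 - w_2) + w_2$ and symmetrically, then combining the two one-sided bounds. If anything, your version is slightly more careful than the paper's, since you make the homogeneity step $\|w_2 - w_1\| = \|w_1 - w_2\|$ explicit and argue for an arbitrary norm as the lemma states, whereas the paper writes the argument only for the $\ell_p$-norm.
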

\begin{proof}
Let $f(x) = \left\| x \right\|_p = (\sum\nolimits_{i=1}^n |x_i|^p)^{1/p}$ be the $\ell_p$-norm of $x$. Based on the triangle inequality of norms (i.e., $ \| x+y\|_p \le \| x \|_p + \| y \|_p,\ \forall x,y \in\mathbb{R}^n$), we know that $\| x\|_p=\|x-y+y\|_p \le \|x-y\|_p+\| y\|_p$. Therefore, $\| x\|_p-\| y\|_p \le \|x-y\|_p$. Similarly we have $\| y\|_p =\|y-x+x\|_p \le \|y-x\|_p+\| x\|_p= \|x-y\|_p+\| x\|_p$. Therefore, $\| y\|_p-\| x\|_p\le \|x-y\|_p$. In summary, we have $\left| \| x\|_p-\| y\|_p \right|\le \|x-y\|_p$, i.e., $f$ is 1-Lipschitz with respect to the $\ell_p$-norm.
\end{proof}

\begin{lemma}
\label{Lemma:Lipschitz_and_differentiable}
(Lipschitz continuity of differentiable functions) An everywhere differentiable function $f: \mathbb{R} \to \mathbb{R}$ is Lipschitz continuous if it has bounded first-order derivative, and the corresponding Lipschitz constant is $\sup |f'(x)|$.
\end{lemma}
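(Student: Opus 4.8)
The plan is to invoke the Mean Value Theorem (MVT), which is the natural bridge between a pointwise bound on the derivative and a Lipschitz estimate on function values. First I would set $\rho = \sup_{x} |f'(x)|$, which is finite by the hypothesis that $f'$ is bounded. Since $f$ is everywhere differentiable on $\mathbb{R}$, it is in particular continuous on every closed interval and differentiable on every open interval, so the hypotheses of the MVT are satisfied on the interval between any two points.

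For the upper bound, fix arbitrary $x_1, x_2 \in \mathbb{R}$ with $x_1 \neq x_2$. By the MVT there exists a point $\xi$ strictly between $x_1$ and $x_2$ such that $f(x_1) - f(x_2) = f'(\xi)(x_1 - x_2)$. Taking absolute values and bounding $|f'(\xi)| \le \rho$ yields $|f(x_1) - f(x_2)| \le \rho |x_1 - x_2|$. The case $x_1 = x_2$ is trivial, so $f$ is $\rho$-Lipschitz in the sense of Definition~\ref{Definition:Lipschitz_functions}, taking the one-dimensional absolute value as the norm.

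To confirm that $\rho = \sup_x |f'(x)|$ is in fact the tightest admissible constant, and hence deserves to be called the Lipschitz constant, I would argue that no smaller value works. For any $x_0$, the definition of the derivative gives $|f'(x_0)| = \lim_{h \to 0} |f(x_0+h) - f(x_0)| / |h|$. If $f$ were $\rho'$-Lipschitz for some $\rho' < \rho$, then every difference quotient would be bounded by $\rho'$, forcing $|f'(x_0)| \le \rho'$ for all $x_0$ and thus $\rho \le \rho'$, a contradiction. Hence $\rho$ is the smallest Lipschitz constant.

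The main obstacle here is less the estimate than the careful verification of the MVT's hypotheses: the whole argument rests on the assumption of everywhere differentiability supplying both the continuity and the differentiability required on the relevant subintervals, together with the boundedness of $f'$ guaranteeing $\rho < \infty$. The tightness direction is the only part that requires more than a one-line computation, and it reduces cleanly to the limit definition of $f'(x_0)$.
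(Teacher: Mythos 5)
Your proof is correct and follows essentially the same route as the paper's: both invoke the Mean Value Theorem to get $|f(x_1)-f(x_2)| = |f'(\xi)|\,|x_1-x_2| \le \rho\,|x_1-x_2|$ with $\rho = \sup_x |f'(x)|$. Your additional argument that $\rho$ is the \emph{smallest} admissible Lipschitz constant (via the difference-quotient definition of the derivative) goes beyond the paper's proof, which establishes only the upper bound --- a harmless and slightly stronger conclusion, though only the upper bound is needed where the lemma is used.
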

\begin{proof}
Let $f$ be a continuous function on the closed interval $[a,b]$ and differentiable on the open interval $(a,b)$. According to the mean value theorem, there exist a $\xi \in (x,y)$ such that $f(x)-f(y)=f'(\xi)(x-y)$. Taking the absolute values yields $|f(x)-f(y)|\le \rho |x-y|$, where $\rho$ is the supremum of $|f'(x)|$ over $(x,y)$.
\end{proof}

\begin{lemma}
\label{Lemma:Lipschitz_of_composition}
(Lipschitz continuity of compositions) The composition of a $\rho_1$-Lipschitz function and a $\rho_2$-Lipschitz function is a $\rho_1 \rho_2$-Lipschitz function.
\end{lemma}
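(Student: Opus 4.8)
The plan is to prove the claim by directly chaining the two Lipschitz bounds supplied by Definition~\ref{Definition:Lipschitz_functions}. First I would name the two functions explicitly: let $h$ be $\rho_2$-Lipschitz and $g$ be $\rho_1$-Lipschitz, with the codomain of $h$ coinciding with the domain of $g$ so that the composition $g \circ h$ is well defined as a real-valued map. I would then fix two arbitrary points $w_1, w_2$ in the common domain and aim to bound $|g(h(w_1)) - g(h(w_2))|$.

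Second, I would apply the Lipschitz condition for $g$ at the image points $h(w_1)$ and $h(w_2)$, which gives
\begin{equation}
|g(h(w_1)) - g(h(w_2))| \le \rho_1 \|h(w_1) - h(w_2)\|_p. \nonumber
\end{equation}
Third, I would bound the remaining factor using the Lipschitz condition for $h$, namely $\|h(w_1) - h(w_2)\|_p \le \rho_2 \|w_1 - w_2\|_p$, and substitute to obtain $|g(h(w_1)) - g(h(w_2))| \le \rho_1 \rho_2 \|w_1 - w_2\|_p$. Since $w_1, w_2$ were arbitrary, this is exactly the statement that $g \circ h$ is $\rho_1 \rho_2$-Lipschitz, and the proof is complete.

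I do not expect any genuine obstacle here, as the argument is a two-line inequality chain requiring only the definition itself. The one point that needs care is norm consistency: the norm measuring distances in the codomain of $h$ must be the same norm that appears in the Lipschitz condition for $g$. Since Definition~\ref{Definition:Lipschitz_functions} fixes an $\ell_p$-norm (and, per the accompanying remark, permits other norms as well), I would state at the outset that all three Lipschitz conditions are understood with respect to the same norm, after which the substitution in the chain is unambiguous.
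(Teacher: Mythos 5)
Your proposal is correct and matches the paper's own proof, which establishes the bound by the same two-step chain: apply the Lipschitz condition of the outer function at the image points, then bound the resulting distance by the Lipschitz condition of the inner function, yielding the product constant $\rho_1\rho_2$. Your added remark on keeping the same norm throughout is a sensible clarification but does not change the argument.
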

\begin{proof}
Assume $g: \mathbb{R}^n \to \mathbb{R}$ is $\rho_1$-Lipschitz continuous and $h: \mathbb{R} \to \mathbb{R}$ is $\rho_2$-Lipschitz continuous. Let $f(x)= h(g(x))$. For $\forall a, b \in \mathbb{R}^n$, we have $|f(a), f(b)| = | h(g(a)), h(g(b)| \le \rho_2 | g(a), g(b))| \le \rho_2 \rho_1 \|a, b\|$. Therefore, $f$ is $\rho_1 \rho_2$-Lipschitz continuous.
\end{proof}

\noindent \textbf{Proof of Lemma 4.2:}
For convexity: According to \cite{boyd2004convex}, $|x|^p$ is convex on $\mathbb{R}$ for $\forall p \ge 1$, and the non-negative weighted sum of convex functions is convex. Therefore, $f$ is convex since it is the sum of convex functions.

For Lipschitz continuity: Let $g:\mathbb{R}^n \to \mathbb{R}$ be the $\ell_p$-norm and $h:\mathbb{R} \to \mathbb{R}$ be the $p^\text{th}$ power function, then $f(x) = h(g(x))$. According to Lemmas~\ref{Lemma:Lipschitz_of_norms}, $g$ is 1-Lipschitz continuous. Since $|x_i| \in [0,\bar x]$, we know that $g(x) = (\sum\nolimits_{i=1}^n |x_i|^p)^{1/p}$ is bounded by 0 and $(\sum\nolimits_{i=1}^n \bar x^p)^{1/p} = \bar x n^{1/p}$. Meanwhile, based on Lemma~\ref{Lemma:Lipschitz_and_differentiable}, it can be proved that $h(x)= x^p$ is Lipschitz continuous over interval $[0, \bar x n^{1/p}]$ for $\forall p \ge 1$, and the Lipschitz constant is $p \bar x^{p-1} n^{p-1/p}$. Therefore, based on Lemma~\ref{Lemma:Lipschitz_of_composition}, we know that $f$ is $\rho$-Lipschitz continuous, where $p \bar x^{p-1} n^{p-1/p}$. $\hfill\square$

\subsection{Proof of Lemma 4.6}
\label{subsec:lemma_4_6}
We first recall the definition of strict convexity and its first and second-order conditions:
\begin{definition}
\label{Definition:strictly_convex_functions}
  (Strictly convex functions) Let $\set{W} \subseteq \mathbb{R}^n$ be a convex set. If a function $f:\mathbb{R}^n \to \mathbb{R}$ satisfies $f(k w_1 +(1-k) w_2) < k f(w_1) +(1-k) f(w_2)$ for $\forall w_1, w_2 \in \set{W}$ and $k \in (0,1)$, it is strictly convex. The first-order condition for strict convexity is $f(w_2) > f(w_1) + \nabla f(w_1)^T (w_2 - w_1)$ for $\forall w_1, w_2 \in \set{W}$, and the second-order condition is $\nabla^2 f(w) \succ 0$ for $\forall w \in \set{W}$. 
\end{definition}

\noindent \textbf{Proof of Lemma 4.6:}
Since $f(x) = \sum\nolimits_{i=1}^n |x_i|^p=\sum\nolimits_{i=1}^n x_i^p$ when $x_i \ge 0$ for $\forall i$, we have $\partial f/\partial {x_i} = p x_i^{p-1}$ and $\partial^2 f/\partial {x_i}^2 = p (p-1) x_i^{p-2}>0$ for $\forall x_i >0$ and $\forall p > 1$. Since $\partial^2 f/{\partial {x_i} \partial {x_j}} = 0, \forall i \ne j$, $\nabla^2 f(x)$ is a diagonal matrix where the diagonal elements $\partial^2 f/\partial {x_i}^2, \forall i$ are the corresponding eigenvalues of $\nabla^2 f(x)$. Therefore, $\nabla^2 f(x)\succ 0$ since all of its eigenvalues are positive, which means that the second-order condition for strict convexity is satisfied. $\hfill\square$

\section{Data for Case Study}
\label{sec:data}
\subsection{Data of IEEE 14-bus system}
\label{subsec:IEEE_system}

The IEEE 14-bus system is depicted in Fig.~\ref{fig:14_bus} \cite{IEEE_14_bus} and the parameters of the five generators are listed in Table~\ref{tab:G_parameters}. The power generation cost of each generator follows a quadratic equation $c_0+c_1 x+c_2 x^2$, where $x$ is the output power of the generator. 

\begin{figure}[!t]
    \centering
    \includegraphics[width=0.9\linewidth]{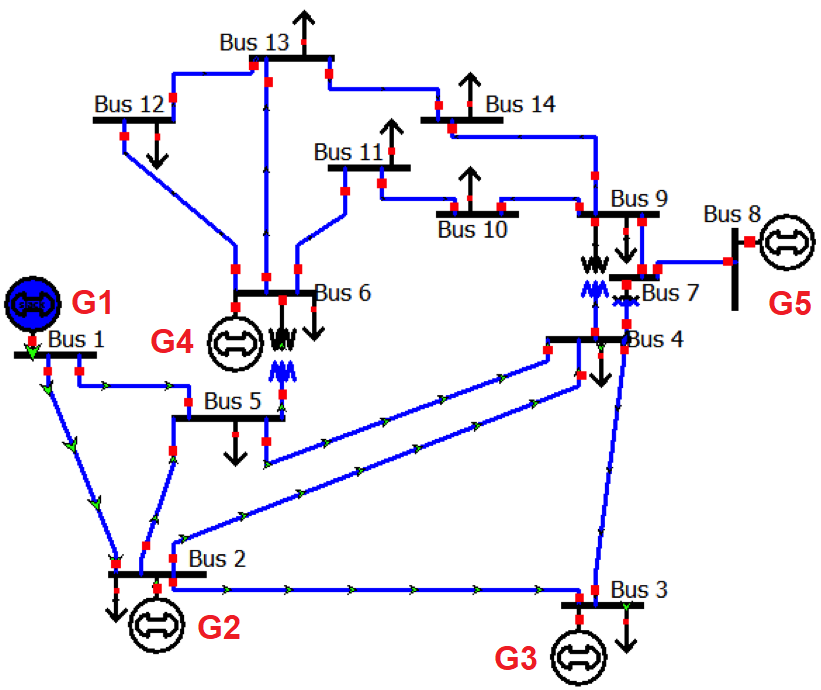}
    \caption{IEEE 14-bus system \cite{IEEE_14_bus}.}
    \label{fig:14_bus}
\end{figure}

\begin{table}[!t]
  \centering
  \caption{Parameters of generators in IEEE 14-bus system}
    \begin{tabular}{ccccc}
    \toprule
    {No.}  & $x^{\max}/x^{\min}$ (MW)  & $c_0$ ($\$$) & $c_1$ ($\$$/MWh)   & $c_2$ ($\$$/MWh$^2$) \\
    \midrule
    G1    & 100/0   & 2    & 0.05     & 0.002 \\
    G2    & 100/0   & 5    & 0.10    & 0.003 \\
    G3    & 100/0   & 8    & 0.15    & 0.004 \\
    G4    & 100/0   & 12   & 0.20    & 0.005 \\
    G5    & 100/0   & 15   & 0.30    & 0.006 \\
    \bottomrule
    \end{tabular}%
  \label{tab:G_parameters}
\end{table}

\subsection{Data of NYISO 1814-bus system}
\label{subsec:NYISO_system}

The NYISO system, consisting of 1814 buses (black dots), 2207 lines, 362 generators, and 33 wind farms (blue dots), is shown in Fig.~\ref{fig:NYISO_system}.
Colors of the transmission lines reflect power flows, with red denoting heavy flow and green indicating light flow. Specific parameters of this system can be found in \cite{NYISO_data}. Fig.~\ref{fig:NYISO_LMPs} shows the heat-map of LMPs at 4 p.m. on Aug. 28, 2018, which is a moment with particularly heavy load and high LMPs.

\begin{figure}[!t]
    \centering
    \includegraphics[width=0.9\linewidth]{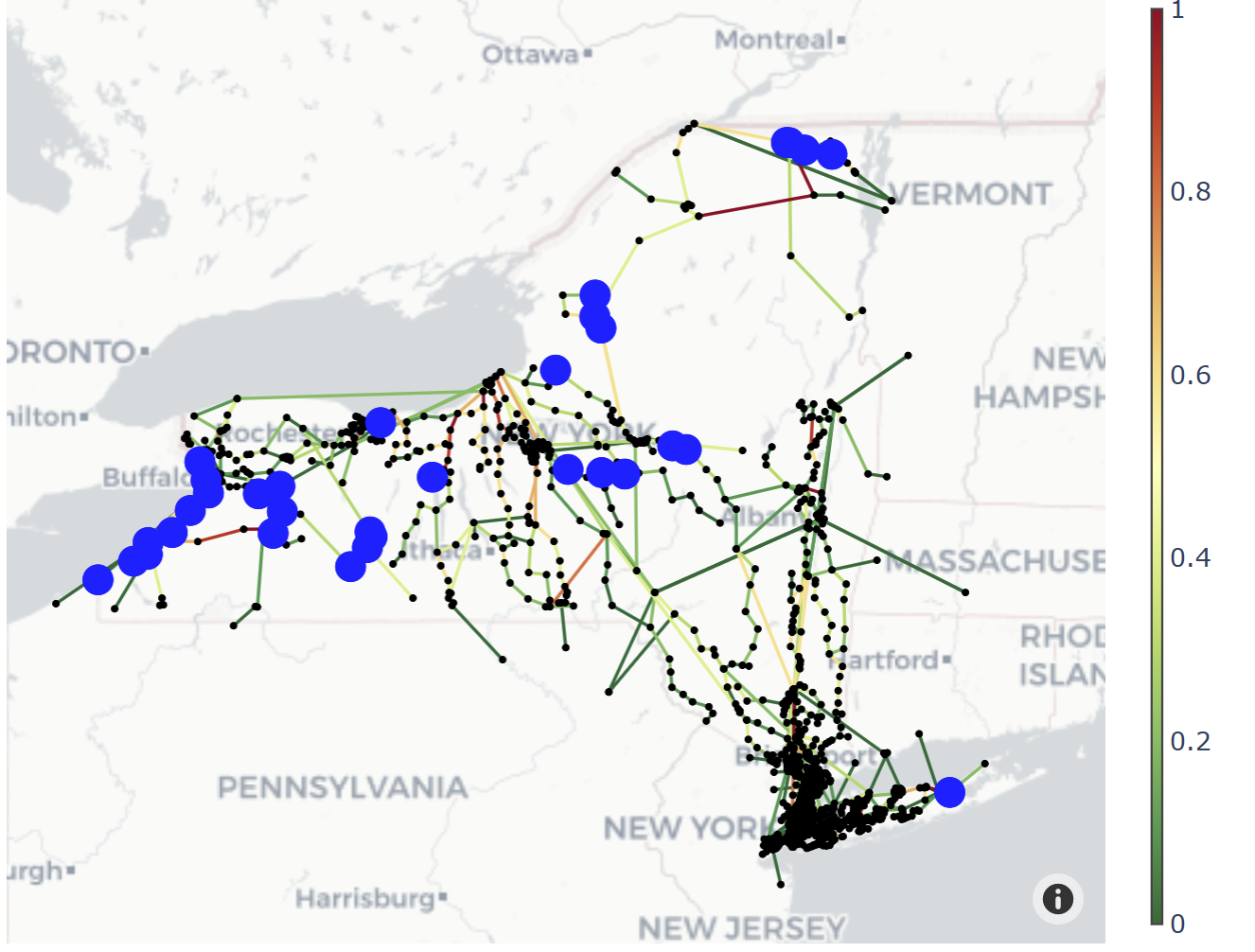}
    \caption{NYISO 1814-bus system.}
    \label{fig:NYISO_system}
\end{figure}

\begin{figure}[!t]
    \centering
    \includegraphics[width=0.9\linewidth]{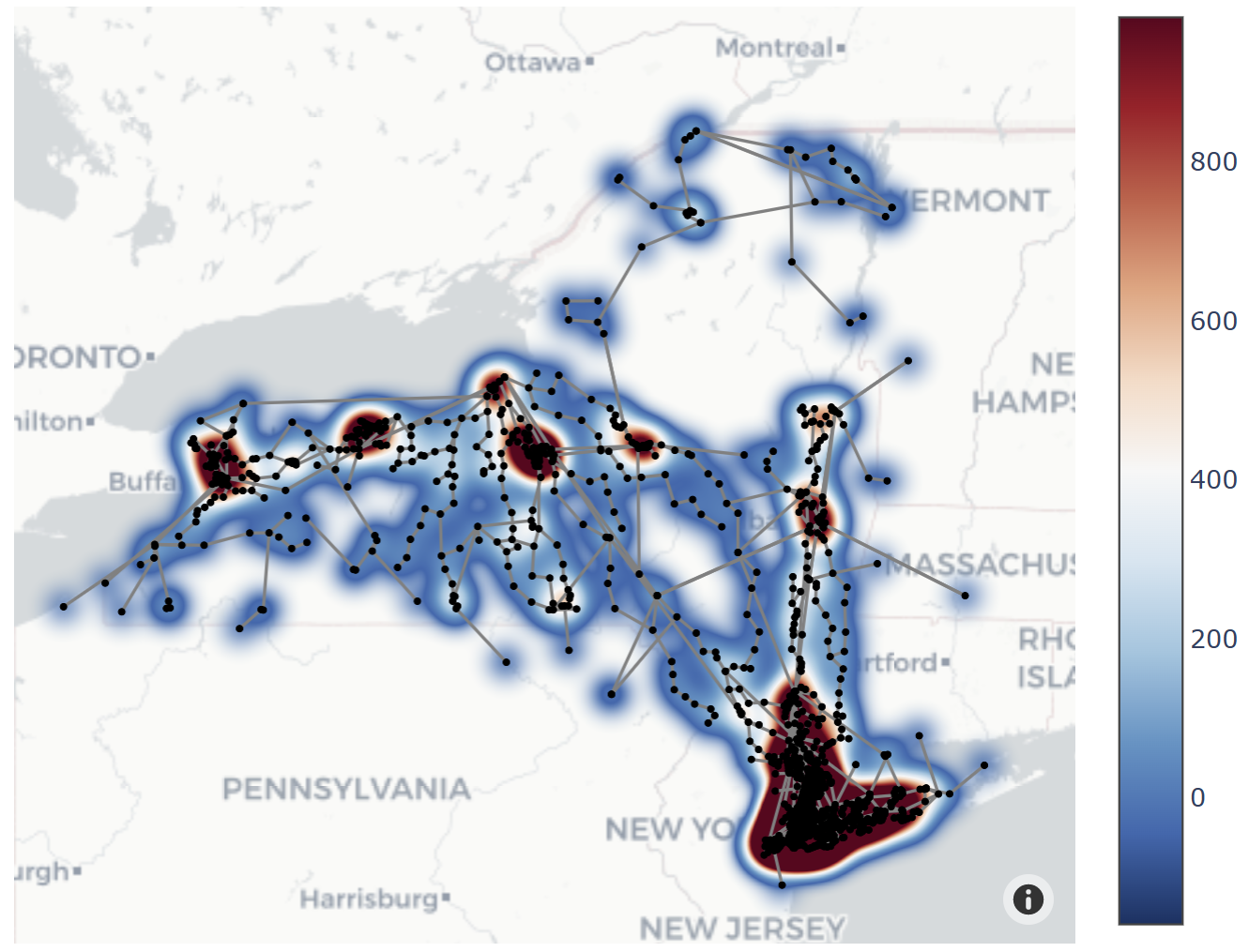}
    \caption{LMP heat-map of NYISO system at 4 p.m. on Aug. 28, 2018 (Unit of LMPs: \$/MWh).}
    \label{fig:NYISO_LMPs}
\end{figure}

\end{document}